\def\opn#1#2{\def#1{\operatorname{#2}}} 
\opn\chara{char} \opn\length{\ell}
\opn\projdim{proj\,dim} \opn\injdim{inj\,dim} \opn\rank{rank}
\opn\depth{depth} \opn\grade{grade} \opn\height{height}
\opn\embdim{emb\,dim} \opn\codim{codim}
\opn\Tr{Tr} \opn\bigrank{big\,rank}
\opn\superheight{superheight}\opn\lcm{lcm}
\opn\trdeg{tr\,deg}%
\opn\reg{reg} \opn\lreg{lreg}
\opn\Ker{Ker} \opn\Coker{Coker} \opn\Im{Im} \opn\Hom{Hom}
\opn\Tor{Tor} \opn\Ext{Ext} \opn\End{End} \opn\Aut{Aut} \opn\id{id}
\opn\nat{nat}
\opn\pff{pf}
\opn\Pf{Pf} \opn\GL{GL} \opn\SL{SL} \opn\mod{mod} \opn\ord{ord}
\def\Implies{\ifmmode\Longrightarrow \else
     \unskip${}\Longrightarrow{}$\ignorespaces\fi}
\def\implies{\ifmmode\Rightarrow \else
     \unskip${}\Rightarrow{}$\ignorespaces\fi}
\def\iff{\ifmmode\Longleftrightarrow \else
     \unskip${}\Longleftrightarrow{}$\ignorespaces\fi}
\DeclareMathOperator*{\argmax}{argmax}
\DeclareMathOperator*{\argmin}{argmin}
\newtheorem{Theorem}{Theorem}[section]
\newtheorem{Remark}[Theorem]{Remark}
\newtheorem{Example}[Theorem]{Example}
\newtheorem{Algorithm}[Theorem]{Algorithm}
\theoremstyle{definition}
\opn\ini{in} \opn\inm{inm} \opn\Sym{Sym} \opn\diag{diag}
\opn\Ii{(i)} \opn\Iii{(ii)}
\title{ A new finite element method for elliptic optimal control problems with pointwise state constraints in energy spaces}
\author{Wei Gong$^\dag$ and Zhiyu Tan$^\diamond$}
\thanks{ $^\dag$NCMIS \& LSEC, Institute of Computational Mathematics, Academy of Mathematics and Systems Science, Chinese Academy of Sciences, China. Email: {\tt wgong@lsec.cc.ac.cn}. The author was supported in part by the Strategic Priority Research Program of the Chinese Academy of Sciences (grant XDB 41000000) and the NSFC (grant 12071468).}
\thanks{$^\diamond$Center for Computation and Technology, Louisiana State University, Baton Rouge, LA 70803, USA.
	Email: {\tt ztan@cct.lsu.edu} or {\tt zhiyutan@amss.ac.cn}}
\begin{document}
\maketitle
{\bf Abstract:}\hspace*{10pt} {In this paper we propose a new finite element method for solving elliptic  optimal control problems with pointwise state constraints, including the distributed controls and the Dirichlet or Neumann boundary controls. The main idea is to use energy space regularizations in the objective functional, while the equivalent representations of the energy space norms, i.e., the $H^{-1}(\Omega)$-norm for the distributed control, the $H^{1/2}(\Gamma)$-norm for the Dirichlet control and the $H^{-1/2}(\Gamma)$-norm for the Neumann control, enable us to transform the optimal control problem into an elliptic variational inequality involving only the state variable. The elliptic variational inequalities are second order for the three cases, and include additional equality constraints for Dirichlet or Neumann boundary control problems. Standard $C^0$ finite elements can be used to solve the resulted variational inequality. We provide preliminary a priori error estimates for the new algorithm for solving distributed control problems. Extensive numerical experiments are carried out to validate the accuracy of the new algorithm.  }

{{\bf Keywords:}\hspace*{10pt} optimal control problem, distributed control, boundary control, pointwise state constraints, energy space method, variational inequality, finite element }

{\bf Subject Classification:} 49J20, 65K10, 65N12, 65N15, 65N30.

\section{Introduction}
\setcounter{equation}{0}

Let $\Omega\subset \mathbb{R}^d$ $(d=2,3)$ be a bounded domain with Lipschitz boundary $\Gamma$. Denote by $Y\subset H\subset Y^*$ a Gelfand triple of Hilbert spaces $Y$ and $H$ defined in $\Omega$. In this paper we consider the following optimal control problem with pointwise state constraints
\begin{eqnarray}
\min\limits_{u\in U,\ y\in K\subset Y}\ J(y,u)={1\over 2}\|y-y_d\|_{H}^2+{\alpha\over 2}\|u\|_{U}^2\quad\mbox{subject\ to}\quad y=Su,\label{abstract_OCP}
\end{eqnarray}
where $y$ is the state variable and $u$ is the control variable, $S:U\rightarrow Y$ is the control to state mapping which will be given as the solution operator of a second order elliptic equation. $y_d\in H$ is the target state and $\alpha>0$ is the regularization parameter. $K\subset Y$ is a closed convex subset representing the state constraints. $Y$ is referred to as the state space and $U$ is the control space. 

Assume that $S:U\rightarrow Y$ is an isomorphism, then we have the norm equivalence
\begin{eqnarray}
\|u\|_U\approx \|Su\|_Y\quad\forall u\in U,\nonumber
\end{eqnarray}
where $\|\cdot\|_Y$ denotes the norm of $Y$. Then we are led to the equivalent optimization problem involving only the state $y$:
\begin{eqnarray}
\min\limits_{y\in K\subset Y}\ J(y)={1\over 2}\|y-y_d\|_{H}^2+{\alpha\over 2}\|Su\|_{Y}^2.\label{OCP_equiv}
\end{eqnarray}
In the following we will specify the precise settings for the distributed control and the Dirichlet/Neumann boundary control. 

PDE-constrained optimal control problem is a hot research topic in the past few decades, due to its extensive applications in economy, fluid dynamics, chemical reaction process, shape and topology optimization, to name just a few, and its close connections to PDEs, optimization, control theory and other related disciplines. As is well known, numerical methods play the key role to put these applications into practice. During the last decades, great efforts and achievements have been made for the efficient solving of  PDE-constrained optimal control problems. We refer to \cite{HinzePinnauUlbrich} and the references therein for recent advances. 

Compared to control constrained optimal control problems, PDE-constrained optimal control with pointwise state constraints is much more difficult to analyze from both the theoretical and numerical points of view. The main difficulty comes from the subtle existence results of Lagrange multipliers and their extremely lower regularity. Under the classical formulations which rely on the first order optimality condition, the Lagrange multipliers associated with the pointwise state constraints are only measures (cf. \cite{Casas1997,Casas}), which yields non-smooth solutions and introduces difficulty in theoretical and numerical analysis of the first order optimality system (cf. \cite{HinzePinnauUlbrich}). The situation becomes even worse for boundary control problems because they usually have less smooth solutions compared to distributed control problems, which may yield restrictions on the considered spatial dimensions and locations where to impose state constraints (\cite{Casas1997,KrumbiegelMeyerRosch,MateosNeitzel}). However, boundary controls have extensive applications in PDE-constrained control problems because they are easy to implement compared to distributed ones, especially in fluid control problems (cf. \cite{GongMateosSinglerZhang}). 

The numerical solutions to boundary control problems with or without control constraints have been extensively studied, we refer to \cite{ApelMateosPfeffererRosch,CasasRaymond,ChowdhuryGudiNandakumaran,DeckelnickGuntherHinze,GongLiuTanYan,GongMateosSinglerZhang,GongYan,OfPhanSteinbach,Winkler} for Dirichlet boundary control problems and to \cite{ApelPfeffererRosch,ApelSteinbachWinkler,CasasMateos,CasasMateosTroltzsch,HinzeMatthes} for Neumann boundary control problems. Although most of the results chosen the control space $L^2(\Gamma)$ for both Dirichlet and Neumann boundary control problems, energy space methods were also used, i.e., $H^{1/2}(\Gamma)$-norm for the Dirichlet case (cf. \cite{OfPhanSteinbach,GongLiuTanYan,ChowdhuryGudiNandakumaran,GongMateosSinglerZhang,Winkler}) and $H^{-1/2}(\Gamma)$-norm for the Neumann case (cf. \cite{ApelSteinbachWinkler}). However, when considering pointwise state constraints, the numerical analysis for  boundary control problems are quite rare. This is in contrast to the fruitful results for distributed control problems, see, e.g., \cite{BrennerGedickeSung,BrennerSung,DeckelnickGuntherHinze,DeckelnickHinze,HintermullerHinze,LiuGongYan,Meyer}. To the best of our knowledge, we are only aware of \cite{JohnWachsmuth,MateosNeitzel} for Dirichlet boundary control problems and \cite{KrumbiegelRosch,KrumbiegelMeyerRosch} for Neumann boundary control problems where the pointwise state constraints were imposed in an interior subdomain. 

In principle, for linear-quadratic optimal control problems with pointwise state constraints the resulting discrete optimization problem can be solved by the standard quadratic programming algorithm, as was done in \cite{DeckelnickHinze}. However, this is only possible for small scale problems because the state constraints were treated as control constraints implicitly. Another approach relies on penalizations, including the Moreau-Yosida regularization (cf. \cite{HintermullerHinze,HintermullerKunisch}) and Lavrentiev regularization (\cite{CherednickenkoKrumbiegelRosch,KrumbiegelRosch,MeyerPrufertTroltzsch,MeyerRoschTroltzsch}), which, however, introduces additional regularization errors. The Moreau-Yosida regularization can be readily extended to boundary control problems, while the Lavrentiev regularization is not so convenient. To overcome this difficulty, a virtual control was introduced in \cite{KrumbiegelRosch} and a source representation was used in \cite{TroltzschYousept}. The third approach is to transform the optimal control problem with pointwise state constraints to a fourth order elliptic variational inequality involving only the state variable, which was first proposed in \cite{LiuGongYan}, and was further analyzed and generalized in \cite{BrennerGedickeSung,BrennerSung,BrennerSungZhang2019,BrennerSungZhang2015}, see also the excellent survey \cite{Brenner} and the references cited therein. The latter approach was also used to solve parabolic optimal control problems with pointwise state constraints in \cite{GongHinze}. However, this approach can not be used directly for boundary control problems. 

In this paper, we provide such an extension of the above third approach to Dirichlet or Neumann boundary control problems with pointwise state constraints. We also apply the similar approach to elliptic distributed control problems with pointwise state or gradient state constraints. In contrast  to the reformulated fourth order elliptic variational inequality proposed in \cite{LiuGongYan}, our new approach involves only second order variational inequalities for distributed control problems. Compared to other approaches, our new algorithm has the following advantages: we do not assume constraint qualification conditions, while the classical approaches usually assume the Slater condition so that the state constraints have to be understood in the continuous space. This poses restrictions on the dimension and the convexity of the domain, as well as the problem data. Our approach is more flexible and the optimal control problem can be set up in different situations, e.g., non-convex domains or more involved constraints,where the only requirement is the well-posedness of the second order elliptic equation in the standard $H^1(\Omega)$ space. 
 
The remaining of this paper is organized as follows. In Section 2 we reformulate the distributed optimal control problem with pointwise state or gradient state constraints to second order elliptic variational inequalities. In Section 3 we consider the reformulations of the Dirichlet or Neumann boundary control problems. Several algorithms for solving the associated variational inequality with equality constraints are also presented. In Section 4 we present the finite element approximation to the elliptic variational inequality derived in Section 3  and give a preliminary error estimate for the variational inequality resulted from the distributed control problem. We also discuss how to recover the discrete controls once we obtain the discrete state. In the last section we carry out extensive numerical experiments to validate the accuracy of the new method. 

We follow standard notation for differential operators, function spaces and norms that can be found for example in \cite{Adamas_2003,Brenner_2008,Ciarlet_1978}. We also denote $C$ (with or without subscript) a generic positive constant which may stand for different values at its different occurrences but does not depend on the mesh size.

\section{Reformulations of the PDE-constrained distributed control problems}
\setcounter{equation}{0}
In this section we consider PDE-constrained distributed control problems in energy spaces, with pointwise constraints on the state or state gradients. By using the equivalent representation of the energy norm of the control, we reformulate the control problem into a second order elliptic variational inequality involving only the state variable. 

To this end, we first introduce a bilinear form $a:H^1(\Omega)\times H^1(\Omega)\rightarrow \mathbb{R}$ associated with a second order elliptic operator $L$:
\begin{eqnarray}
a(y,v):=\langle Ly,v\rangle_{H^1(\Omega)^*,H^1(\Omega)}\quad \forall y,v\in H^1(\Omega).\nonumber
\end{eqnarray}
Then for simplicity we set  $a(y,v)=(\nabla y,\nabla v)$ for Dirichlet boundary value problems and $a(y,v)=(\nabla y,\nabla v)+(y,v)$ for Neumann boundary value problems.

\subsection{Distributed control problems with pointwise state constraints}
Note that the distributed control problem with pointwise state constraints in the control space $L^2(\Omega)$ can be reformulated as a  fourth order elliptic variational inequality, see, e.g., \cite{LiuGongYan}.  However, with the energy space $H^{-1}(\Omega)$ which is sufficient to define a weak solution in $H_0^1(\Omega)$, we are able to transform the optimal control problem into a second order elliptic variational inequality. This is more favorable than the fourth order variational inequality derived from the control space $L^2(\Omega)$.

In this aspect, we consider the distributed control problem in energy space (cf. \cite{LangerSteinbachYang,NeumullerSteinbach} for an unconstrained elliptic optimal control problem)
 \begin{eqnarray}
\min\limits_{u\in H^{-1}(\Omega)}\quad J(y,u)={1\over 2}\|y-y_d\|_{0,\Omega}^2+{\alpha\over 2}\|u\|_{H^{-1}(\Omega)}^2
\end{eqnarray}
subject to 
\begin{equation}\label{state_distributed}
\left\{
\begin{aligned}
Ly:=-\Delta y=f+u\quad&\mbox{in}\ \Omega,\\
y=0\quad&\mbox{on}\ \Gamma,\\
y\leq y_b\quad&\mbox{in}\ \Omega.
\end{aligned}
\right.
\end{equation}
We suppose that $y_b\in H^1(\Omega)$ and $y_b>0$ so that the state constraints are not active on $\Gamma$. 

Recall that for the $H^{-1}(\Omega)$ space we have the equivalent norm definition (cf. \cite{NeumullerSteinbach})
\begin{eqnarray}
\|u\|_{H^{-1}(\Omega)}^2:=a(y_u,y_u)=\|\nabla y_u\|_{0,\Omega}^2,
\end{eqnarray}
where $y_u\in H_0^1(\Omega)$ satisfies
\begin{equation}\label{state_dist_yu}
\left\{
\begin{aligned}
-\Delta y_u=u\quad&\mbox{in}\ \Omega,\\
y_u=0\quad&\mbox{on}\ \Gamma.
\end{aligned}
\right.
\end{equation}
Let $y_f\in H_0^1(\Omega)$ solve
\begin{equation}\label{state_Dirichlet_f}
\left\{
\begin{aligned}
-\Delta y_f=f\quad&\mbox{in}\ \Omega,\\
y_f=0\quad&\mbox{on}\ \Gamma.
\end{aligned}
\right.
\end{equation}
Then we have $y=y_u+y_f$ and we are led to an equivalent optimization problem
\begin{eqnarray}
\min\limits_{y_u\in K^0}\quad J(y_u)={1\over 2}\|y_u+y_f-y_d\|_{0,\Omega}^2+{\alpha\over 2}\|\nabla y_u\|_{0,\Omega}^2,
\end{eqnarray}
whose Euler-Lagrange equation can be characterized as: Find $y_u\in K^0$ such that
\begin{eqnarray}
\alpha a(y_u,v-y_u)+(y_u,v-y_u)\geq (y_d-y_f,v-y_u)\quad\forall v\in K^0,\label{VI_distributed}
\end{eqnarray}
where $K^0:=\{y\in H_0^1(\Omega):\ y\leq y_b-y_f\ \mbox{a.e.\ in}\ \Omega\}$. Then with the energy space method we can obtain a second order elliptic variational inequality. 

Now we are going to derive the associated differential equations satisfied by the state $y_u$. We follow the standard approach and sketch the proof for simplicity. More details can be found in \cite{GlowinskiLions,KinderlehrerStampacchia}. By using integration by parts we have
\begin{eqnarray}
\int_\Omega (-\alpha\Delta y_u+y_u+y_f-y_d)(v-y_u)dx\geq 0\quad\forall v\in H_0^1(\Omega),\ v\leq y_b-y_f.\label{distributed_IbP}
\end{eqnarray}
Let $v=y_u+\psi$ with $\psi\in\mathcal{D}(\Omega)$ and $\psi\leq 0$. It is clear that $v\in H_0^1(\Omega)$  and $v\leq y_b-y_f$.
Inserting $v$ into the above inequality \eqref{distributed_IbP} we have
\begin{eqnarray}
\int_\Omega (-\alpha\Delta y_u+y_u+y_f-y_d)\psi dx\geq 0\quad\forall \psi\in\mathcal{D}(\Omega),\ \psi\leq 0.\nonumber
\end{eqnarray}
Therefore,
\begin{eqnarray}
-\alpha\Delta y_u+y_u+y_f-y_d\leq 0\quad\mbox{a.e.\ in}\ \Omega.\label{distributed_less}
\end{eqnarray}

Now we define the active and inactive sets
\begin{eqnarray}
\Omega^-:=\{x\in \Omega: y_u(x)<(y_b-y_f)(x)\},
\quad\Omega^0:=\{x\in \Omega: y_u(x)=(y_b-y_f)(x)\}.\nonumber
\end{eqnarray}
Then for any $\theta \in \mathcal{D}(\Omega)$, $\theta(x)\in [0,1]$, we have $v=\theta (y_b-y_f)(x)+(1-\theta)y_u\in H_0^1(\Omega)$ and $v\leq y_b-y_f$. Inserting $v$ into \eqref{distributed_IbP} we have
\begin{eqnarray}
\int_\Omega (-\alpha\Delta y_u+y_u+y_f-y_d)\theta(y_b-y_f-y_u) dx\geq 0\quad\forall \theta\in\mathcal{D}(\Omega),\ 0\leq \theta\leq 1.\nonumber
\end{eqnarray}
That is 
\begin{eqnarray}
\int_{\Omega^-} (-\alpha\Delta y_u+y_u+y_f-y_d)\theta(y_b-y_f-y_u) dx\geq 0\quad\forall \theta\in\mathcal{D}(\Omega),\ 0\leq \theta\leq 1.\nonumber
\end{eqnarray}
Therefore, we conclude that
\begin{eqnarray}
-\alpha\Delta y_u+ y_u+y_f-y_d\geq 0\quad\mbox{a.e.\ in}\ \Omega^-.\label{distributed_great}
\end{eqnarray}
This combining with \eqref{distributed_less} yields
\begin{eqnarray}
-\alpha\Delta y_u+ y_u+y_f-y_d= 0\quad\mbox{a.e.\ in}\ \Omega^-.
\end{eqnarray}

To summarize, we can characterize the variational inequality as follows
\begin{eqnarray}
-\alpha\Delta y_u+ y_u+y_f=y_d\quad\mbox{a.e.\ in}\ \Omega^-,\quad
-\alpha\Delta y_u+y_u+y_f-y_d\leq 0\quad\mbox{a.e.\ in}\ \Omega^0.\nonumber
\end{eqnarray}
Then we can introduce the slack variable $\mu\in H^{-1}(\Omega)$ such that
\begin{eqnarray}
\alpha a(y_u,v)+(y_u,v)-\langle \mu,v\rangle = (y_d-y_f,v)\quad \forall v\in H_0^1(\Omega)
\end{eqnarray}
and the complementarity conditions hold: $\mu\leq 0$, $\langle \mu,y_u+y_f-y_b\rangle_{H^{-1},H^1}=0$. By using the well-known regularity theory for second order elliptic obstacle problems we can expect that $y_u\in H^2(\Omega)\cap H_0^1(\Omega)$ when $\Omega$ is a convex polygonal domain and $y_b\in H^2(\Omega)$. 

Compared to the standard approach which searches for controls in $L^2(\Omega)$, we remark that the energy space approach gives a Lagrange multiplier in $H^{-1}(\Omega)$.

\begin{Remark}\label{rem:dis_LM_regularity}
The existence of a Lagrange multiplier in $H^{-1}(\Omega)$ can also be guaranteed by the result of classical optimization theory, see e.g., Example 1.7 in \cite{Ito_2008}. The derivation above also indicates that if $y_u\in H^2(\Omega)\cap H_0^1(\Omega)$, we actually have $\mu\in L^2(\Omega)$.
\end{Remark}

\subsection{Distributed control problems with pointwise gradient state constraints}

We can also consider the distributed control problem with pointwise constraints on the gradient of the state 
\begin{eqnarray}
\min\limits_{u\in U}\quad J(y,u)={1\over 2}\|y-y_d\|_{0,\Omega}^2+R(u)
\end{eqnarray}
subject to 
\begin{equation}\label{state}
\left\{
\begin{aligned}
-\Delta y=u\quad&\mbox{in}\ \Omega,\\
y=0\quad&\mbox{on}\ \Gamma,\\
|\nabla y|\leq y_b\quad&\mbox{in}\ \Omega.
\end{aligned}
\right.
\end{equation}
Here $R(u)$ is a regularization term and $|\cdot |$ denotes the standard Euclidean norm for vectors. 
The above problem was studied in \cite{DeckelnickGuntherHinze2009} with the regularization term $R(u)={\alpha\over q}\|u\|^q_{L^q(\Omega)}$, where $q>d$ was chosen such that $W^{2,q}(\Omega)\hookrightarrow C^1(\bar\Omega)$ holds. Therefore, the gradient state constraints can be understood in the continuous space and the Slater condition can be assumed. However, this setting was no longer valid for non-convex polygonal domain $\Omega$. We refer to \cite{GuntherHinze,OrtnerWollner,Wollner} for more results on elliptic distributed control problems with pointwise gradient state constraints. In \cite{BrennerSungWollner,BrennerSungWollner2020} the authors considered the approach of transforming the optimal control problem into a fourth order elliptic variational inequality with gradient constraints and proposed a $C^1$-conforming finite element method. However, only the one dimensional case was considered and the results can not be extended to higher dimensions straightforwardly. 

With the similar energy space method, we can choose $R(u)={\alpha\over 2}\|u\|^2_{H^{-1}(\Omega)}$ and understand the gradient state constraints in $L^2(\Omega)$. In this case, we can obtain the equivalent second order variational inequality problem: Find $y\in H_0^1(\Omega)$ such that $|\nabla y|\leq y_b$ and 
\begin{eqnarray}
\alpha a(y,v-y)+(y,v-y)\geq (y_d,v-y)\quad\forall v\in H_0^1(\Omega),\ |\nabla v|\leq y_b.
\end{eqnarray}

Assume that $\Omega\subset \mathbb{R}^d$ is bounded and convex with a Lipschitz boundary $\Gamma$. If $y_d\in L^p(\Omega)$, $1<p<\infty$, then we have the following regularity result (cf. \cite{GlowinskiLions})
\begin{eqnarray}
\Delta y\in L^p(\Omega),\ \ \ y\in W^{2,p}(\Omega),\ \ 1<p<\infty.\label{regularity}
\end{eqnarray}
For the gradient constraints $|\nabla y|\leq y_b$ we can define the active set $\Omega^0$ and inactive set $\Omega^-$ as follows:
\begin{eqnarray}
\Omega^0:=\{x\in \Omega:\ \ |\nabla y(x)|=y_b\},\quad
\Omega^-:=\Omega\backslash\Omega^0=\{x\in \Omega:\ \ |\nabla y(x)|<y_b\}.\nonumber
\end{eqnarray}
One can also prove that
\begin{eqnarray}
-\alpha\Delta y+y=y_d\ \ \ \mbox{a.e.\ in}\ \ \Omega^-;\quad
|\nabla y(x)|=y_b\ \ \ \mbox{a.e.\ in}\ \ \Omega^0.\nonumber
\end{eqnarray}

\subsection{A unified approach for distributed control problems}

In fact, for distributed control problems we can put the above approach and the approach proposed in \cite{LiuGongYan} in a unified framework. Denote by $Y$ and $U$ two Hilbert spaces defined in $\Omega$. Associated with these spaces we define a linear and bounded operator $A:Y\rightarrow U$. We assume that $A$ is an isomorphism between $Y$ and $U$. Then we have the following equivalent representation of norms in $U$ and $Y$
\begin{eqnarray}
\|u\|_{U}^2\approx \|A^{-1}u\|_{Y}^2\quad \forall u\in U.\label{dual_norm}
\end{eqnarray}

We consider the abstract optimal control problem
\begin{eqnarray}
\min\limits_{u\in U,\ y\in K}\ J(y,u)={1\over 2}\|\mathcal{I}_{Y\rightarrow H}y-y_d\|_{H}^2+{\alpha\over 2}\|u\|_{U}^2\ \mbox{subject\ to}\ Ay=u,\label{abstract_OCP}
\end{eqnarray}
where $K\subset Y$, $y$ is the state variable and $u$ is the control variable, $y_d\in H$ is the target state and $\mathcal{I}_{Y\rightarrow H}$ denotes the canonical injection of $Y$ into $H$. $K\subset Y$ is a closed convex subset. 

By using \eqref{dual_norm} the optimal control problem (\ref{abstract_OCP}) can be rewritten as
\begin{eqnarray}
\min\limits_{u\in U,\ A^{-1}u\in K}\ J(u)={1\over 2}\|\mathcal{I}_{Y\rightarrow H}A^{-1}u-y_d\|_{H}^2+{\alpha\over 2}\|A^{-1}u\|_Y^2.
\end{eqnarray}
Setting $y=A^{-1}u$. By viewing $y$ as the optimization variable, we are led to 
\begin{eqnarray}
\min\limits_{y\in K\subset Y}\ J(y)={1\over 2}\|\mathcal{I}_{Y\rightarrow H}y-y_d\|_{H}^2+{\alpha\over 2}\|y\|_Y^2.\label{abstract_OPT}
\end{eqnarray}
The Euler-Lagrange equation of the above optimization problem reads: Find $y\in Y$ such that $y\in K$ and
\begin{eqnarray}
\alpha (y,v-y)_Y+(\mathcal{I}_{Y\rightarrow H}y,\mathcal{I}_{Y\rightarrow H}(v-y))_H\geq (y_d, \mathcal{I}_{Y\rightarrow H}(v-y))_H
\end{eqnarray}
holds for any $v\in K\subset Y$, where $(\cdot,\cdot)_Y$ denotes the inner product in $Y$. 

When we choose $U=H^{-1}(\Omega)$ and $Y=H_0^1(\Omega)$, we known that $A=(-\Delta)$ is an isomorphism between $Y$ and $U$. Therefore, $\|u\|_{H^{-1}(\Omega)}\approx \|y\|_{H_0^1(\Omega)}$, we are led to the  minimization problem
\begin{eqnarray}
\min\limits_{y\in K\subset Y}\ J(y)={1\over 2}\|\mathcal{I}_{Y\rightarrow H}y-y_d\|_{H}^2+{\alpha\over 2}\|\nabla y\|_{0,\Omega}^2.\nonumber
\end{eqnarray}
This recovers the approach proposed in the above subsections.

If we choose $U=L^2(\Omega)$ and $Y=\{y\in H_0^1(\Omega):\Delta y\in L^2(\Omega)\}$,  then $A=(-\Delta)$ is also an isomorphism between $Y$ and $U$. Since $Y$ is a Hilbert space with the norm $\|y\|_Y=\|y\|_{0,\Omega}+\|\Delta y\|_{0,\Omega}\approx \|\Delta y\|_{0,\Omega}$ by the Poincar\'e-Friedrichs inequality, we have $\|u\|_{0,\Omega}\approx \|y\|_{Y}\approx \|\Delta y\|_{0,\Omega}$. Therefore, we are led to the equivalent minimization problem 
\begin{eqnarray}
\min\limits_{y\in K\subset Y}\ J(y)={1\over 2}\|\mathcal{I}_{Y\rightarrow H}y-y_d\|_{H}^2+{\alpha\over 2}\|\Delta y\|_{0,\Omega}^2.\nonumber
\end{eqnarray}
This is exactly the approach proposed in \cite{LiuGongYan}. 

\section{Reformulations of the PDE-constrained boundary control problems}
\setcounter{equation}{0}
In this section we consider PDE-constrained boundary control problems in energy spaces with pointwise state constraints, including Dirichlet or Neumann boundary controls. By using the equivalent representation of the energy norm of the control, we reformulate the control problem into a second order elliptic variational inequality involving only the state variable, plus some additional equality constraint. 

\subsection{Dirichlet boundary control problems with pointwise state constraints}

In this subsection we consider the  Dirichlet boundary control problems with pointwise state constraints
\begin{eqnarray}
\min\limits_{u\in U}\quad J(y,u)={1\over 2}\|y-y_d\|_{0,\Omega}^2+{\alpha\over 2}\|u\|_U^2
\end{eqnarray}
subject to 
\begin{equation}\label{state_Dirichlet}
\left\{
\begin{aligned}
Ly:=-\Delta y=f\quad&\mbox{in}\ \Omega,\\
y=u\quad&\mbox{on}\ \Gamma,\\
y\leq y_b\quad&\mbox{in}\ \Omega.
\end{aligned}
\right.
\end{equation}
There are several choices for the control space $U$. The first one is that $U:=L^2(\Gamma)$ (cf. \cite{ApelMateosPfeffererRosch,CasasRaymond,DeckelnickGuntherHinze,GongYan} ). With this choice we have that the associated state variable $y:=Su\in H^{1/2}(\Omega)$ because $S:L^2(\Gamma)\rightarrow H^{1/2}(\Omega)$ is well-defined (cf. \cite{CasasRaymond}).  Therefore, the pointwise state constraints can only be understood in $L^2(\Omega)$. Instead, the authors in \cite{MateosNeitzel} imposed the pointwise state constraints in a subdomain $\Omega_0\subset\subset\Omega$ by using the higher interior regularity of elliptic equations.

The second choice is the so-called energy space, i.e., $U=H^{1/2}(\Gamma)$ (cf. \cite{ChowdhuryGudiNandakumaran,GongLiuTanYan,GongMateosSinglerZhang,OfPhanSteinbach,Winkler}). Then one can only expect that $y:=Su\in H^{1}(\Omega)$ because $S:H^{1/2}(\Gamma)\rightarrow H^{1}_\Delta(\Omega)=\{v\in H^1(\Omega),\ Lv=0\ \mbox{in}\ \Omega\}$ is well-defined. In this case we can not ensure that $y$ is continuous.

Now we consider the control space $U=H^{1/2}(\Gamma)$ and choose ${\alpha\over 2}|u|^2_{H^{1/2}(\Gamma)}$ in the objective functional. Recall that we have the equivalent representation of the $H^{1/2}(\Gamma)$ semi-norm (cf. \cite{OfPhanSteinbach}):
\begin{eqnarray}
|u|_{H^{1/2}(\Gamma)}^2:=\langle u,\mathcal{D}u\rangle_{H^{1/2}(\Gamma);H^{-1/2}(\Gamma)}=(\nabla y_u,\nabla y_u)=\|\nabla y_u\|^2_{0,\Omega},
\end{eqnarray}
where $\mathcal{D}u:=\gamma_1y_u$ is the Dirichlet-to-Neumann operator with $\gamma_1:H_\Delta^1(\Omega)\rightarrow H^{-1/2}(\Gamma)$ and  $y_u$ is the solution of the following equation with homogeneous right-hand side
\begin{equation}\label{state_Dirichlet_u}
\left\{
\begin{aligned}
-\Delta y_u=0\quad&\mbox{in}\ \Omega,\\
y_u=u\quad&\mbox{on}\ \Gamma.
\end{aligned}
\right.
\end{equation}
In fact, an integration by parts yields the result.  Let $y_f\in H_0^1(\Omega)$ be the solution of (\ref{state_Dirichlet_f}). Then we have $y=y_f+y_u$. Note that $S:H^{1/2}(\Gamma)\rightarrow H^{1}_\Delta(\Omega)$ is an isomorphism.

Now we are led to the equivalent optimization problem
\begin{eqnarray}
\min\limits_{y_u\in K}\ J(y_u)={1\over 2}\|y_f+y_u-y_d\|_{0,\Omega}^2+{\alpha\over 2}\|\nabla y_u\|_{0,\Omega}^2\nonumber\\
\mbox{subject\ to}\quad y_u\in H^1(\Omega):\quad (\nabla y_u,\nabla v)=0\quad\forall v\in H_0^1(\Omega),\label{Dirichlet_VI}
\end{eqnarray}
where $K:=\{v\in H^1(\Omega):\ v\leq y_b-y_f\ \mbox{a.e.\ in}\ \Omega\}$.

Define the kernel space 
\begin{eqnarray}
V_D^0=\{y\in H^1(\Omega):\quad (\nabla y,\nabla v)=0\quad\forall v\in H_0^1(\Omega) \}.\nonumber 
\end{eqnarray}
Then the above optimization problem can be written as 
\begin{eqnarray}
\min\limits_{y_u\in V_D^0\cap K}\ J(y_u)={1\over 2}\|y_f+y_u-y_d\|_{0,\Omega}^2+{\alpha\over 2}\|\nabla y_u\|_{0,\Omega}^2,\nonumber
\end{eqnarray}
which can be characterized as: Find $y_u\in V_D^0\cap K$ such that
\begin{eqnarray}
\alpha (\nabla y_u,\nabla(v-y_u))+(y_u,v-y_u)\geq (y_d-y_f,v-y_u)\quad \forall v\in V_D^0\cap K.\nonumber
\end{eqnarray}

\subsection{Neumann boundary control problems with pointwise state constraints}

In this subsection we consider the Neumann boundary control problem with pointwise state constraints
\begin{eqnarray}
\min\limits_{u\in U}\quad J(y,u)={1\over 2}\|y-y_d\|_{0,\Omega}^2+{\alpha\over 2}\|u\|_{U}^2
\end{eqnarray}
subject to 
\begin{equation}\label{state_Neumann}
\left\{
\begin{aligned}
Ly:=-\Delta y+y=f\quad&\mbox{in}\ \Omega,\\
\partial_ny=u\quad&\mbox{on}\ \Gamma,\\
y\leq y_b\quad&\mbox{in}\ \Omega.
\end{aligned}
\right.
\end{equation}
Similar to the Dirichlet case, there are also several choices for the control space $U$. The first and the natural one, is that $U:=L^2(\Gamma)$ (cf. \cite{ApelPfeffererRosch,CasasMateos,CasasMateosTroltzsch,HinzeMatthes}). With this choice we have that the associated state variable $y:=Su\in H^{3/2}(\Omega)$ because $S:L^2(\Gamma)\rightarrow H^{3/2}(\Omega)$ is well-defined.  Note that in dimension two we have $H^{3/2}(\Omega)\hookrightarrow C(\bar\Omega)$. Therefore, the pointwise state constraints can be understood in $C(\bar\Omega)$. In general, to ensure that $y\in C(\bar\Omega)$ one has to choose the control space $L^t(\Gamma)$ for some $t>d-1$ where $d$ is the dimension of $\Omega$ (cf. \cite{Casas1997,Casas}). Another approach is to impose box type control constraints to ensure that $u\in L^\infty(\Gamma)$. In  \cite{KrumbiegelMeyerRosch} the authors imposed the pointwise state constraints in a subdomain $\Omega_0\subset\subset\Omega$ by using the higher interior regularity of elliptic equations.

The second choice is the so-called energy space, i.e., $U=H^{-1/2}(\Gamma)$ (cf. \cite{ApelSteinbachWinkler}). Then one can only expect that $y:=Su\in H^{1}(\Omega)$ because $S:H^{-1/2}(\Gamma)\rightarrow H^{1}_\Delta(\Omega):=\{v\in H^1(\Omega), Lv=0\ \mbox{in}\ \Omega\}$ is well-defined. 

Now we consider the control space $U=H^{-1/2}(\Gamma)$. Recall that we have the equivalent representation of the $H^{-1/2}(\Gamma)$-norm (cf. \cite{ApelSteinbachWinkler}):
\begin{eqnarray}
\|u\|_{H^{-1/2}(\Gamma)}^2:=\langle u,\mathcal{N}u\rangle_{H^{-1/2}(\Gamma);H^{1/2}(\Gamma)}=(\nabla y_u,\nabla y_u)+(y_u,y_u)=\|y_u\|_{1,\Omega}^2,\nonumber
\end{eqnarray}
where $\mathcal{N}u:=\gamma_0y_u$ is the Neumann-to-Dirichlet operator with $\gamma_0:H^1(\Omega)\rightarrow H^{1/2}(\Gamma)$ the Dirichlet trace operator and $y_u\in H^1(\Omega)$ satisfies the following Neumann boundary value problem with homogeneous right-hand side
\begin{equation}\label{state_Neumann_u}
\left\{
\begin{aligned}
-\Delta y_u+y_u=0\quad&\mbox{in}\ \Omega,\\
\partial_ny_u=u\quad&\mbox{on}\ \Gamma.
\end{aligned}
\right.
\end{equation}
In fact, an integration by parts yields the result. Let $y_f\in H_0^1(\Omega)$ solve
\begin{equation}\label{state_Neumann_f}
\left\{
\begin{aligned}
-\Delta y_f+y_f=f\quad&\mbox{in}\ \Omega,\\
\partial_ny_f=0\quad&\mbox{on}\ \Gamma.
\end{aligned}
\right.
\end{equation}
Then we have $y=y_f+y_u$. Note that $S:H^{-1/2}(\Gamma)\rightarrow H^{1}_\Delta(\Omega)$ is an isomorphism. 

Now we are led to the equivalent  optimization problem
\begin{eqnarray}
\min\limits_{y_u\in K}\ J(y_u)={1\over 2}\|y_f+y_u-y_d\|_{0,\Omega}^2+{\alpha\over 2}\|y_u\|_{1,\Omega}^2\nonumber\\
\mbox{subject\ to}\quad y_u\in H^1(\Omega):\quad a(y_u,v)=0\quad\forall v\in H_0^1(\Omega).\label{Neumann_VI}
\end{eqnarray}

Define the kernel space 
\begin{eqnarray}
V_N^0=\{y\in H^1(\Omega):\quad (\nabla y,\nabla v)+(y,v)=0\quad\forall v\in H_0^1(\Omega) \}.\nonumber 
\end{eqnarray}
Then the above optimization problem can be written as 
\begin{eqnarray}
\min\limits_{y_u\in V_N^0\cap K}\ J(y_u)={1\over 2}\|y_f+y_u-y_d\|_{0,\Omega}^2+{\alpha\over 2}\|y_u\|_{1,\Omega}^2,\nonumber
\end{eqnarray}
which can be characterized as: Find $y_u\in V_N^0\cap K$ such that
\begin{eqnarray}
\alpha a(y_u,v-y_u)+(y_u,v-y_u)\geq (y_d-y_f,v-y_u)\quad \forall v\in V_N^0\cap K.\nonumber
\end{eqnarray}

\subsection{A general variational inequality with equality constraint and its numerical solution}
The variational inequality with equality constraint arising from either the Dirichlet or Neumann boundary control problem can be studied in a unified framework. 

We consider a general  minimization problem
\begin{eqnarray}
\min\limits_{y_u\in V^0\cap K}\ J(y_u)={\alpha\over 2}a(y_u,y_u)+{1\over {2}}\|y_u+y_f-y_d\|_{0,\Omega}^2,
\end{eqnarray}
where $V^0:=\{y\in H^1(\Omega): \ a(y,v)=0\ \forall v\in H_0^1(\Omega)\}$.

\begin{Remark}
Note that $V^0$ is a closed subspace of $H^1(\Omega)$. Following similar arguments as those of Example 1.7 in \cite{Ito_2008}, there exists a Lagrange multiplier $\mu_0\in (V^0)^*$ for the above optimization problem. Since $V^0$ is closed, $\mu_0$ can be extended as an element in $H^1(\Omega)^*$ by the Hahn-Banach theorem.
\end{Remark}

Introduce the Lagrangian functional
\begin{eqnarray}
\mathcal{L}(y_u,p)=J(y_u)-a(y_u,p),\quad p\in H_0^1(\Omega).
\end{eqnarray}
Differentiating $\mathcal{L}(y_u,p)$ with respect to $y_u$ yields
\begin{eqnarray}
\alpha a(y_u,v-y_u)+(y_u,v-y_u)\geq (y_d-y_f,v-y_u)+a(p,v-y_u)\quad\forall v\in K.\nonumber
\end{eqnarray}
Differentiating $\mathcal{L}(y_u,p)$ with respect to $p$ gives
\begin{eqnarray}
a(y_u,v)=0\quad\forall v\in H_0^1(\Omega).\nonumber
\end{eqnarray}
Then we are led to a general second order elliptic variational inequality: Find $(y_u,p)\in K\times H_0^1(\Omega)$ such that 
\begin{eqnarray}
a(y_u,v)=0\quad\forall v\in H_0^1(\Omega),\nonumber\\
\alpha a(y_u,(w-y_u))+(y_u,w-y_u)\geq (y_d-y_f,w-y_u)+a(p,w-y_u)\quad\forall w\in K.\nonumber
\end{eqnarray}
From the second equation, there exists a $\lambda\in H^1(\Omega)^*$ such that
\begin{eqnarray}
\alpha a(y_u,w)+(y_u,w) - \langle\lambda,w\rangle_{H^1(\Omega)^*,H^1(\Omega)}= (y_d-y_f,w)+a(p,w)\quad\forall w\in H^1(\Omega)\nonumber
\end{eqnarray}
and $\lambda \leq 0$, $\langle\lambda,y_u-y_b+y_f\rangle_{H^1(\Omega)^*,H^1(\Omega)}=0$. 

In the following we give a further look at the above system. Let $v=y_u+\psi$ with $\psi\in\mathcal{D}(\Omega)$ and $\psi\leq 0$. It is clear that $v\in H^1(\Omega)$  and $v\leq y_b-y_f$. Moreover, $v-y_u=\psi\in H_0^1(\Omega)$. Substituting $v$ into the variational inequality and recalling that $y_u$ is harmonic, we have
\begin{eqnarray}
a(p,\psi)\leq (y_f+y_u-y_d,\psi)\nonumber
\end{eqnarray}
or equivalently, 
\begin{eqnarray}
\int_\Omega (Lp-y+y_d)\psi dx\leq 0\quad\forall \psi\in \mathcal{D}(\Omega),\ \psi\leq 0.\nonumber
\end{eqnarray}
That is,
\begin{eqnarray}
L p-y+y_d\geq 0\quad\mbox{a.e.\ in}\ \Omega. \label{positive}
\end{eqnarray}
Now we define the active and inactive sets
\begin{eqnarray}
\Omega^-:=\{x\in \Omega: y_u(x)<(y_b-y_f)(x)\},
\quad\Omega^0:=\{x\in \Omega: y_u(x)=(y_b-y_f)(x)\}.\nonumber
\end{eqnarray}
Then for any $\theta \in \mathcal{D}(\Omega)$, $\theta(x)\in [0,1]$, we have $v=\theta (y_b-y_f)(x)+(1-\theta)y_u\in H^1(\Omega)$ and $v\leq y_b-y_f$. Substituting $v$ into the variational inequality and recalling that $y_u$ is harmonic, we have
\begin{eqnarray}
\int_\Omega (L p-y+y_d)\theta (y_b-y)dx\leq 0\quad\forall \theta \in \mathcal{D}(\Omega),\ \theta(x)\in [0,1].\nonumber
\end{eqnarray}
That is,
\begin{eqnarray}
Lp-y+y_d\leq 0\quad\mbox{a.e.\ in}\ \Omega^-. 
\end{eqnarray}
Combining the above results we have
\begin{eqnarray}
Lp=y-y_d\quad\mbox{a.e.\ in}\ \Omega^-\quad \mbox{and}\quad L p-y+y_d\geq 0\quad\mbox{a.e.\ in}\ \Omega^0.
\end{eqnarray}

There are several approaches to solve the above variational inequality with equality constraint, which are briefly reviewed below.
\begin{enumerate}[(1)]
\item Let
 \begin{center}$S=\left[
  \begin{array}{cc}
    \alpha L+I &-L\\
     L & 0  \\
  \end{array}
\right]$,\ \  $b = \left[
     \begin{array}{cc}
     y_d-y_f\\
     0\\
     \end{array}
     \right]$,\ \ $X = \left[
     \begin{array}{cc}
     y\\
     p\\
     \end{array}
     \right]$.
\end{center}
Then the coupled system can be written as the following linear projection operator equation
\begin{eqnarray}
X=P_{X_{ad}}(X-(SX+b)),\nonumber
\end{eqnarray}
where $P_{X_{ad}}$ denotes the orthogonal projection onto the admissible set $X_{ad}:=\{(v_1,v_2)\in H^1(\Omega)\times H_0^1(\Omega):\ v_1\leq y_b-y_f\quad \mbox{in}\ \Omega\}$. We can solve the above linear projection equation on the discrete level by using the method proposed in \cite{He}. 

\item  The discrete problem can be solved by the augmented Lagrangian method. Define 
\begin{eqnarray}
\mathcal{L}_\lambda(y_u,p)=J(y_u)+(L y_u,p)+\lambda\|L y_u\|^2_{0,\Omega}.\nonumber
\end{eqnarray}
Then we solve sequentially $\min\limits_{y_u\in H^2(\Omega),y_u\leq y_b-y_f}\mathcal{L}_\lambda(y_u,p)$ for given $p$ and $\max\limits_{p\in L^2(\Omega)}\mathcal{L}_\lambda(y_u,p)$ for given $y_u$. We remark that with this approach we need to solve fourth order variational inequalities. 

\item The discrete problem can be solved by the primal-dual hybrid gradient (PDHG) method. Given $(y_u^k,p^k)$ and $\gamma, s > 0$ sufficiently large, we solve sequentially
\begin{enumerate}[$\bullet$]
\item $y_u^{k+1}=\argmin\limits_{y_u\in H^1(\Omega)}\ \{\mathcal{L}(y_u,p^k)+{\gamma\over 2}a(y_u-y_u^k,y_u-y_u^k):\ y_u\leq y_b-y_f\}$. That is, find $y_u\in H^1(\Omega)$ such that $y_u\leq y_b-y_f$ and
\begin{eqnarray}
(\alpha+\gamma) a(y_u,v-y_u)+(y_u,v-y_u) \geq \gamma a(y_u^k,v-y_u)&+(y_d-y_f,v-y_u)\nonumber\\
&+a(p^k,v-y_u).\nonumber
\end{eqnarray}

\item $p^{k+1}=\argmax\limits_{p\in H_0^1(\Omega)}\ \{\mathcal{L}(2y_u^{k+1}-y_u^k, p)-{s\over 2}\|\nabla(p-p^k)\|_{0,\Omega}^2$, which can be characterized by
\begin{eqnarray}
s(\nabla p^{k+1}, \nabla v)=s(\nabla p^k, \nabla v)-a(2y_u^{k+1}-y_u^k, v)\quad\forall v\in H_0^1(\Omega).\nonumber
\end{eqnarray}
\end{enumerate}
\end{enumerate}

\subsection{Extensions and comments}
In the above several subsections we consider the Laplace operator. We remark that the results can be readily extended to general second order elliptic operators. For example, we can consider the following second order elliptic operator:
\begin{eqnarray}
Ly:=-\sum\limits_{i,j=1}^d\partial_{x_i}(a_{ij}\partial_{x_j}y)+a_0y,
\end{eqnarray}
where $a_{ij}\in L^\infty(\Omega)$, $i,j=1,\cdots, d$, $a_0> 0$. The coefficient matrix $A=(a_{ij})_{i,j=1}^d$ is further assumed to be symmetric and positive  definite. Then we can define the equivalent definition of the $H^{1/2}(\Gamma)$-norm and $H^{-1/2}(\Gamma)$-norm as follows:
\begin{eqnarray}
&\|u\|_{H^{-1/2}(\Gamma)}^2:=(A\nabla y_u,\nabla y_u)+a_0(y_u,y_u),\nonumber\\
&\|u\|_{H^{1/2}(\Gamma)}^2:=(A\nabla w_u,\nabla w_u)+a_0(w_u,w_u),\nonumber
\end{eqnarray}
where $y_u$ and $w_u$ satisfy respectively
\begin{equation}\label{state}
\left\{
\begin{aligned}
Ly_u=0\quad&\mbox{in}\ \Omega,\\
A\nabla y_u\cdot{\bf n}=u\quad&\mbox{on}\ \Gamma
\end{aligned}
\right.
\end{equation}
and
\begin{equation}\label{state}
\left\{
\begin{aligned}
Lw_u=0\quad&\mbox{in}\ \Omega,\\
w_u=u\quad&\mbox{on}\ \Gamma.
\end{aligned}
\right.
\end{equation}
Proceeding as above we can obtain the similar second order elliptic variational inequality. 

Of course, we can also consider the Dirichlet or Neumann boundary control problems with pointwise gradient state constraints.

For PDE-constrained optimal control problems we need to impose pointwise control constraints in certain applications. This is well-studied for problems posed in $L^2(\Omega)$ for distributed controls and in $L^2(\Gamma)$ or $H^{1/2}(\Gamma)$ for boundary control problems. However, for distributed and Neumann boundary control problems posed in energy spaces these control constraints are not meaningful in the classical sense because the control functions are only distributions, and can only be understood in the duality sense (cf. \cite{ApelSteinbachWinkler} for the Neumann case). On the other hand, for Dirichlet boundary control problems the pointwise state constraints impose automatically constraints on the control variable as the Dirichlet trace of the state.   
 
\section{Finite element approximations and convergence analysis}
\setcounter{equation}{0}
In this section we consider the finite element approximations to the resulted variational inequalities. Let $\{\mathcal{T}_h\}_{h>0}$ be a family of  shape regular triangulations of the domain $\Omega$. Associated with $\mathcal{T}_h$ we construct the piecewise linear and continuous finite element space $V_h$. Denote by $V_h^0=V_h\cap H_0^1(\Omega)$. Let $y_{b,h}\in V_h$ be an approximation of $y_b$, which can be chosen as the Lagrange interpolation or $L^2$-projection of $y_b$, depending on the smoothness of $y_b$. 

Let $y_{f,h}\in V_h^0$ satisfy
\begin{eqnarray}
a(y_{f,h},v_h)=(f,v_h)\quad\forall v_h\in V_h^0\label{y_f_h_D}
\end{eqnarray}
for Dirichlet boundary value problems and $y_{f,h}\in V_h$ satisfy
\begin{eqnarray}
a(y_{f,h},v_h)=(f,v_h)\quad\forall v_h\in V_h\label{y_f_h_N}
\end{eqnarray}
for Neumann boundary value problems.

\subsection{Distributed control problems}
In this subsection we consider the finite element approximation to the variational inequality resulted from the distributed control problem. Define 
\begin{eqnarray}
K_h^0:=\{v_h\in V_h^0:\quad v_h\leq \tilde{y}_{b,h}:=y_{b,h}-y_{f,h}\}.
\end{eqnarray}

Now we consider the finite element approximation to the variational inequality (\ref{VI_distributed}). The discrete scheme reads: Find $y_{u,h}\in K_h^0$ such that
\begin{eqnarray}
\alpha a( y_{u,h},v_h-y_{u,h})+(y_{u,h},v_h-y_{u,h})\geq (y_d-y_{f,h},v_h-y_{u,h})\quad\forall v_h\in K_h^0.\label{Distribute_VI_discrete}
\end{eqnarray} 
Then it is clear that $y_h:=y_{u,h}+y_{f,h}\in V_h^0$ is an approximation to $y$ and there holds $y_h\leq y_{b,h}$ in $\Omega$.

Once we obtain the discrete state $y_h$, we can recover the distributed control $u_h$ by solving the following equation: Find $u_h\in V_h^0$ such that
\begin{eqnarray}
(u_h,v_h)=(\nabla y_h,\nabla v_h)-(f,v_h)\quad\forall v_h\in V_h^0.
\end{eqnarray}
This is motivated from the variational formulation of the state equation: Find $y\in H^1_0(\Omega)$ such that
\begin{eqnarray}
(\nabla y,\nabla v)=(f,v)+( u,v)\quad\forall v\in H^1_0(\Omega).
\end{eqnarray}
The finite element approximation to the variational inequality with gradient constraints can be formulated in a similar way by defining the discrete constraint set $K_h:=\{v_h\in V_h^0:\ |\nabla v_h|\leq y_{b,h}\}$.

Now we discuss the regularity of solutions to the above second order elliptic variational inequality. It was shown in \cite{KinderlehrerStampacchia} that if $Y=H_0^1(\Omega)$, $y_d\in L^2(\Omega)$ and $y_b\in H^2(\Omega)$ such that $y_b\geq 0$ on $\Gamma$, $\Omega$ is a convex polygonal or smooth domain, then $y\in H^2(\Omega)$.  Furthermore, it was stated in \cite{BrezziHagerRaviart} that if $y_d\in L^\infty(\Omega)\cap BV(\Omega)$, $y_b\in C^3(\Omega)$ such that $y_b>0$, and $\Gamma$ is sufficiently smooth, then $y\in W^{s,p}(\Omega)$ for all $1<p<\infty$ and $s<2+1/p$. 

\begin{Theorem}\label{Thm:priori}
Let $y=y_u+y_f$ and $y_h=y_{u,h}+y_{f,h}$, where $y_u$ and $y_{u,h}$ are the solutions of the second order elliptic variational inequality (\ref{VI_distributed}) and its piecewise linear finite element approximation (\ref{Distribute_VI_discrete}), $y_f$ and $y_{f,h}$ are the solutions of (\ref{state_Dirichlet_f}) and (\ref{y_f_h_D}), respectively. Assume that $y_u, y_f\in H^{1+\beta}(\Omega)$ for $\beta\in (0,1]$. Then we have the following an a priori error estimate
\begin{eqnarray}
\|y-y_h\|_{1,\Omega}\leq Ch^{\beta/2}(\|y\|_{1+\beta,\Omega}+\|y_b\|_{2,\Omega}).
\end{eqnarray}
Moreover, if $y_u, y_f\in H^{2}(\Omega)$, we have the following improved an a priori error estimate
\begin{eqnarray}
\|y-y_h\|_{1,\Omega}\leq Ch(\|y\|_{2,\Omega}+\|y_b\|_{2,\Omega}).
\end{eqnarray}

\end{Theorem}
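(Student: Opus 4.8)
Throughout write $B(w,v):=\alpha a(w,v)+(w,v)$, a symmetric, continuous, and coercive bilinear form on $H^1_0(\Omega)$ with coercivity constant $\alpha_0:=\min\{\alpha,1\}$, and abbreviate $f:=y_d-y_f$, $f_h:=y_d-y_{f,h}$, and $e:=y_u-y_{u,h}$. The plan is to split the error by the triangle inequality, $y-y_h=(y_u-y_{u,h})+(y_f-y_{f,h})$. The term $\|y_f-y_{f,h}\|_{1,\Omega}$ is the finite element error for the linear problems \eqref{state_Dirichlet_f} and \eqref{y_f_h_D}, so C\'ea's lemma and interpolation give $\|y_f-y_{f,h}\|_{1,\Omega}\le Ch^{\beta}\|y_f\|_{1+\beta,\Omega}$ (respectively $Ch\|y_f\|_{2,\Omega}$ in the $H^2$ case), which is already of higher order than the asserted rate. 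The whole difficulty is therefore concentrated in the variational-inequality error $\|e\|_{1,\Omega}$, and the natural tool is a non-conforming Falk-type estimate in which the continuous problem is represented through the Lagrange multiplier $\mu\in H^{-1}(\Omega)$ from Section 2, characterized by $B(y_u,v)-(f,v)=\langle\mu,v\rangle$ for all $v\in H^1_0(\Omega)$, together with $\mu\le0$ and the complementarity $\langle\mu,y_u+y_f-y_b\rangle=0$.

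The key preliminary step is to construct a discrete function that is feasible for \eqref{Distribute_VI_discrete} and close to $y_u$. Choosing $y_{b,h}=I_hy_b$ with $I_h$ the nodal (Lagrange) interpolant, I would set $v_h:=I_hy-y_{f,h}$, where $y=y_u+y_f\in H^{1+\beta}(\Omega)\hookrightarrow C(\bar\Omega)$ in the considered regime so that $I_hy$ is well defined. Nodal comparison yields $I_hy\le I_hy_b=y_{b,h}$ pointwise, whence $v_h\le y_{b,h}-y_{f,h}=\tilde y_{b,h}$ and $v_h\in K_h^0$; moreover $v_h-y_u=(I_hy-y)+(y_f-y_{f,h})$, so that $\|v_h-y_u\|_{1,\Omega}\le Ch^{\beta}(\|y\|_{1+\beta,\Omega}+\|y_f\|_{1+\beta,\Omega})$. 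Decomposing $e=(y_u-v_h)+(v_h-y_{u,h})$, using coercivity, inserting the feasible $v_h$ into \eqref{Distribute_VI_discrete}, and replacing $B(y_u,v_h-y_{u,h})-(f,v_h-y_{u,h})$ by $\langle\mu,v_h-y_{u,h}\rangle$, I obtain
\begin{eqnarray}
\alpha_0\|e\|_{1,\Omega}^2\le B(e,e)\le B(e,y_u-v_h)+\langle\mu,v_h-y_{u,h}\rangle+(f-f_h,v_h-y_{u,h}).\nonumber
\end{eqnarray}

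It then remains to estimate the three contributions. The first is absorbed by Young's inequality, $B(e,y_u-v_h)\le\tfrac{\alpha_0}{2}\|e\|_{1,\Omega}^2+C\|v_h-y_u\|_{1,\Omega}^2$, contributing $O(h^{2\beta})$. For the multiplier term I split $\langle\mu,v_h-y_{u,h}\rangle=\langle\mu,v_h-y_u\rangle+\langle\mu,y_u-y_{u,h}\rangle$. Here I use that $y_b>0$ on $\Gamma$ renders the constraint inactive near the boundary, so that $\mu$ is supported in the interior of $\Omega$ and pairs with $H^1(\Omega)$ functions; the complementarity then gives $\langle\mu,y_u-y_{u,h}\rangle=\langle\mu,(y_b-y_{b,h})-(y_f-y_{f,h})\rangle+\langle\mu,\tilde y_{b,h}-y_{u,h}\rangle$. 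The last pairing is nonpositive since $\mu\le0$ and $\tilde y_{b,h}-y_{u,h}\ge0$, while the remaining pairings are bounded by $\|\mu\|_{H^{-1}(\Omega)}(\|v_h-y_u\|_{1,\Omega}+\|y_b-y_{b,h}\|_{1,\Omega}+\|y_f-y_{f,h}\|_{1,\Omega})=O(h^{\beta})$, using $y_b\in H^2(\Omega)$. Finally $(f-f_h,v_h-y_{u,h})=-(y_f-y_{f,h},v_h-y_{u,h})$ is controlled by $\|y_f-y_{f,h}\|_{0,\Omega}(\|e\|_{1,\Omega}+Ch^{\beta})$ and absorbed. Collecting the estimates gives $\|e\|_{1,\Omega}^2\le Ch^{\beta}(\|y\|_{1+\beta,\Omega}+\|y_b\|_{2,\Omega})^2$, hence $\|e\|_{1,\Omega}\le Ch^{\beta/2}(\|y\|_{1+\beta,\Omega}+\|y_b\|_{2,\Omega})$, and adding back $\|y_f-y_{f,h}\|_{1,\Omega}$ yields the first estimate.

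For the improved estimate I would repeat the argument under $y_u,y_f\in H^2(\Omega)$, where Remark \ref{rem:dis_LM_regularity} gives $\mu\in L^2(\Omega)$. Then every pairing with $\mu$ can be estimated in $L^2$ rather than in $H^{-1}$: $\langle\mu,v_h-y_u\rangle=(\mu,v_h-y_u)\le\|\mu\|_{0,\Omega}\|v_h-y_u\|_{0,\Omega}=O(h^2)$, and similarly for the data pairings with $y_b-y_{b,h}$ and $y_f-y_{f,h}$, so that the consistency contribution improves from $O(h^{\beta})=O(h)$ to $O(h^2)$; together with the $O(h^2)$ approximation term this gives $\|e\|_{1,\Omega}^2\le Ch^2(\|y\|_{2,\Omega}+\|y_b\|_{2,\Omega})^2$ and hence $\|e\|_{1,\Omega}\le Ch(\|y\|_{2,\Omega}+\|y_b\|_{2,\Omega})$. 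The main obstacle is precisely the non-conformity $K_h^0\not\subset K^0$ caused by the mismatch between the obstacles $\tilde y_{b,h}=y_{b,h}-y_{f,h}$ and $y_b-y_f$: it forces the special feasible choice $v_h=I_hy-y_{f,h}$ and the use of the sign of $\mu$ against the nonnegative function $\tilde y_{b,h}-y_{u,h}$. The square-root loss in the first estimate is genuine and stems from the low regularity of $\mu$, which when $\beta<1$ is only available in $H^{-1}(\Omega)$ and is therefore paired with the $O(h^{\beta})$ interpolation error linearly rather than quadratically.
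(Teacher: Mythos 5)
Your proposal is correct and follows essentially the same route as the paper's proof: both are Falk-type arguments that split $y-y_h=(y_u-y_{u,h})+(y_f-y_{f,h})$, dispose of the second part by standard elliptic finite element estimates, and bound the variational-inequality error via coercivity of $b(\cdot,\cdot)=\alpha a(\cdot,\cdot)+(\cdot,\cdot)$, the multiplier representation $b(y_u,v)=(y_d-y_f,v)+\langle\mu,v\rangle$, the discrete inequality tested with a feasible interpolant, complementarity, the sign of $\mu$ against the discrete feasibility $\tilde y_{b,h}-y_{u,h}\ge 0$, and finally the dichotomy $\mu\in H^{-1}(\Omega)$ versus $\mu\in L^2(\Omega)$ (Remark \ref{rem:dis_LM_regularity}) to obtain the rates $h^{\beta/2}$ and $h$. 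The one genuine difference is the comparison function: you take $v_h=I_hy-y_{f,h}$, while the paper takes $v_h=y_u^I$ and asserts $y_u^I\in K_h$. Your choice is in fact the more careful one. Nodal comparison gives $I_hy\le I_hy_b=y_{b,h}$, hence $v_h\le\tilde y_{b,h}$ unconditionally, whereas the paper's claim $y_u^I\le y_{b,h}-y_{f,h}$ follows from $y_u\le y_b-y_f$ only if $y_{f,h}\le y_f^I$ at the nodes, which discrete solutions need not satisfy; the extra term $y_f^I-y_{f,h}$ your choice introduces into $v_h-y_u$ is harmless, being $O(h^{\beta})$ in $H^1(\Omega)$ and $O(h^{2})$ in $L^2(\Omega)$ in the respective regularity regimes, so all rates are preserved. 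Two technical points are glossed over equally in your argument and in the paper's: the pairing of $\mu$ with functions such as $\tilde y_{b,h}-y_{u,h}$ that do not vanish on $\Gamma$ requires exactly the justification you sketch (the contact set is compactly contained in $\Omega$ because $y_b>0$ near $\Gamma$, so $\mu$ can be paired with $H^1(\Omega)$ functions), and nodal interpolation of $y$ needs $H^{1+\beta}(\Omega)\hookrightarrow C(\bar\Omega)$, which restricts $\beta$ when $d=3$.
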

\begin{proof}
The proof follows from the idea of \cite{BrezziHagerRaviart}. Here for completeness we include a proof.

Since $y=y_u+y_f$ and $y_h=y_{u,h}+y_{f,h}$, we have
\begin{eqnarray}
\|y-y_h\|_{1,\Omega}&\leq& \|y_u-y_{u,h}\|_{1,\Omega}+\|y_f-y_{f,h}\|_{1,\Omega}\nonumber\\
&\leq&\|y_u-y_{u,h}\|_{1,\Omega}+Ch^\beta\|y_f\|_{1+\beta,\Omega},\nonumber
\end{eqnarray}
where we used the standard error estimate for elliptic equations. It suffices to estimate $\|y_u-y_{u,h}\|_{1,\Omega}$. Let $b(y_u,v):=\alpha (\nabla y_u,\nabla v)+ (y_u,v)$. For any $v_h\in K_h\subset K:=\{v\in H_0^1(\Omega),\ v(x)\leq (y_b-y_{f})(x)\ \mbox{a.e.}\ x\in\Omega \}$, we have
\begin{eqnarray}
c\|y_u-y_{u,h}\|_{1,\Omega}^2&\leq &b(y_u-y_{u,h},y_u-y_{u,h})\nonumber\\
&=&b(y_u-y_{u,h},y_u-v_h)+b(y_u-y_{u,h},v_h-y_{u,h})\nonumber\\
&=&b(y_u-y_{u,h},y_u-v_h)+\langle\mu, v_h-y_{u,h}\rangle_{H^{-1}(\Omega),H_0^1(\Omega)}\nonumber\\
&&+(y_d-y_{f},v_h-y_{u,h}) - b(y_{u,h},v_h-y_{u,h})\nonumber\\
&\leq&b(y_u-y_{u,h},y_u-v_h)+\langle\mu, v_h-y_{u,h}\rangle_{H^{-1}(\Omega),H_0^1(\Omega)}\nonumber\\
&&+(y_{f,h}-y_f,v_h-y_{u,h}),\nonumber
\end{eqnarray}
where we used (\ref{Distribute_VI_discrete}) in the last inequality. 

Let $v_h:=y^I_u\in K_h$ be the Lagrange interpolation of $y_u$. We can derive from the standard interpolation error estimate that
\begin{eqnarray}
c\|y_u-y_{u,h}\|_{1,\Omega}^2&\leq&b(y_u-y_{u,h},y_u-y^I_u)+\langle\mu, y^I_u-y_{u,h}\rangle_{H^{-1}(\Omega),H_0^1(\Omega)}\nonumber\\
&&+(y_{f,h}-y_f,y^I_u-y_{u,h})\nonumber\\
&\leq&Ch^\beta\|y_u-y_{u,h}\|_{1,\Omega}\|y_u\|_{1+\beta,\Omega}+Ch^{2\beta}\|y_f\|_{1+\beta,\Omega}\|y^I_u-y_{u,h}\|_{0,\Omega} \nonumber\\
&&+ \langle\mu, y^I_u-y_{u,h}\rangle_{H^{-1}(\Omega),H_0^1(\Omega)}.\nonumber
\end{eqnarray}
Since
$$\|y^I_u-y_{u,h}\|_{0,\Omega} \leq  \|y^I_u-y_{u}\|_{0,\Omega}  + \|y_u-y_{u,h}\|_{0,\Omega}\leq  Ch^{1+\beta}\|y_u\|_{1+\beta,\Omega} + C\|y_u-y_{u,h}\|_{1,\Omega}, $$
by Young's inequality, we have
$$\|y_u-y_{u,h}\|_{1,\Omega}^2 \leq  Ch^{2\beta
} + \langle\mu, y^I_u-y_{u,h}\rangle_{H^{-1}(\Omega),H_0^1(\Omega)}.$$

Now it remains to estimate the second term. Note that there holds 
\begin{eqnarray}
\mu\leq 0,\quad \langle\mu, y_b-y_f-y_u\rangle_{H^{-1}(\Omega),H_0^1(\Omega)}=0 \nonumber.
\end{eqnarray}
Therefore, from $y_u\leq y_b-y_f$ and $y_{u,h}\leq y_{b,h}-y_{f,h} = y_b^I - y_{f,h}$ we have
\begin{eqnarray}
&&\langle\mu, y^I_u-y_{u,h}\rangle_{H^{-1}(\Omega),H_0^1(\Omega)}\nonumber\\
&=&\langle\mu,y^I_u-y_b^I+y_f^I-(y_u-y_b+y_f)\rangle_{H^{-1}(\Omega),H_0^1(\Omega)} \nonumber\\
&&+ \langle\mu, y_u-y_b+y_f\rangle_{H^{-1}(\Omega),H_0^1(\Omega)} +\langle\mu,y_b^I-y_f^I-y_{u,h}\rangle_{H^{-1}(\Omega),H_0^1(\Omega)}\nonumber\\
&\leq&\langle\mu,y^I_u-y_b^I+y_f^I-(y_u-y_b+y_f)\rangle_{H^{-1}(\Omega),H_0^1(\Omega)}+\langle\mu,y_{f,h}-y_f^I\rangle_{H^{-1}(\Omega),H_0^1(\Omega)}\nonumber\\
&\leq & Ch^{\beta}\|\mu\|_{-1,\Omega}(\|y_u-y_b+y_f\|_{1+\beta,\Omega} + \|y_f\|_{1+\beta,\Omega})\nonumber.
\end{eqnarray}

If $y_u, y_f\in H^2(\Omega)$, we have $\mu\in L^2(\Omega)$ by Remark \ref{rem:dis_LM_regularity}. Then
\begin{eqnarray}
\langle\mu, y^I_u-y_{u,h}\rangle_{H^{-1}(\Omega),H_0^1(\Omega)}
&\leq& (\mu,y^I_u-y_b^I+y_f^I-(y_u-y_b+y_f))+(\mu,y_{f,h}-y_f^I)\nonumber\\
&\leq&Ch^2\|\mu\|_{0,\Omega}(\|y_u-y_b+y_f\|_{2,\Omega}+\|y_f\|_{2,\Omega}).\nonumber
\end{eqnarray}

Hence,
\begin{equation}\nonumber
\|y_u-y_{u,h}\|_{1,\Omega} \leq
\left\{
\begin{aligned}
&Ch^{\beta/2}\quad \mbox{if}\ \ \beta\in (0,1),\\
&Ch\quad\ \ \ \ \ \mbox{if}\ \ \beta =1\\
\end{aligned}
\right.
\end{equation}
and
\begin{equation}\nonumber
\|y-y_{h}\|_{1,\Omega} \leq
\left\{
\begin{aligned}
&Ch^{\beta/2}\quad \mbox{if}\ \ \beta\in (0,1),\\
&Ch\quad\ \ \ \ \ \mbox{if}\ \ \beta =1.\\
\end{aligned}
\right.
\end{equation}
This completes the proof.
\end{proof}

\begin{Remark}
For $y_u, y_f\in H^{1+\beta}(\Omega)$ with $\beta\in (0,1]$, if we can show $\mu\in (H^{1-\beta}(\Omega))^*$, then we have
$$ \|y-y_{h}\|_{1,\Omega} \leq Ch^\beta.$$
\end{Remark}

\subsection{Dirichlet boundary control problems}
In this subsection we consider the finite element approximation to the variational inequality resulted from Dirichlet boundary control problems. 

Define 
\begin{eqnarray}
K_h:=\{v_h\in V_h:\quad v_h\leq \tilde{y}_{b,h}:=y_{b,h}-y_{f,h}\}.\nonumber
\end{eqnarray}
Now we consider the finite element approximation to the variational inequality (\ref{Dirichlet_VI})\begin{eqnarray}
\min\limits_{y_{u,h}\in K_h}\ J(y_{u,h})={1\over 2}\|y_{f,h}+y_{u,h}-y_d\|_{0,\Omega}^2+{\alpha\over 2}\|\nabla y_{u,h}\|_{0,\Omega}^2\nonumber\\
\mbox{subject\ to}\quad y_{u,h}\in V_h:\quad (\nabla y_{u,h},\nabla v_h)=0\quad\forall v_h\in V_h^0.\label{Dirichlet_VI_h}
\end{eqnarray}

Define the discrete kernel space 
\begin{eqnarray}
V_{D,h}^0=\{y_h\in V_h:\quad (\nabla y_h,\nabla v_h)=0\quad\forall v_h\in V_h^0\}.\nonumber
\end{eqnarray}
Then the above optimization problem can be written as 
\begin{eqnarray}
\min\limits_{y_{u,h}\in V_{D,h}^0\cap K_h}\ J(y_{u,h})={1\over 2}\|y_{f,h}+y_{u,h}-y_d\|_{0,\Omega}^2+{\alpha\over 2}\|\nabla y_{u,h}\|_{0,\Omega}^2,\nonumber
\end{eqnarray}
which can be characterized as: Find $y_{u,h}\in V_{D,h}^0\cap K_h$ such that 
\begin{eqnarray}
\alpha a(y_{u,h},v_h-y_{u,h})+(y_{u,h},v_h-y_{u,h})\geq (y_d-y_{f,h},v_h-y_{u,h})\quad\forall v_h\in V_{D,h}^0\cap K_h.\nonumber
\end{eqnarray}
After solving the above variational inequality, we then obtain an approximation $y_h:=y_{u,h}+y_{f,h}\in V_h$ to $y$ and there holds $y_h\leq y_{b,h}$ in $\Omega$. Once the discrete state $y_h$ is obtained, we can recover the discrete control variable $u_h:=y_h|_{\Gamma}\in V_h(\Gamma)$.

By using the Lagrangian multiplier method, the discrete optimization problem is equivalent to find $y_{u,h}\in K_h$ and $p_h\in V_h^0$ such that
\begin{eqnarray}
a(y_{u,h},v_h)=0\quad\forall v_h\in V_h^0,\nonumber\\
\alpha a(y_{u,h},w_h-y_{u,h})+(y_{u,h},w_h-y_{u,h})\geq (y_d-y_{f,h},w_h-y_{u,h})+a(p_h,w_h-y_{u,h})\nonumber
\end{eqnarray}
for $\forall w_h\in K_h$.

\subsection{Neumann boundary control problems}
In this subsection we consider the finite element approximation to the variational inequality resulted from Neumann boundary control problems. 

We define the following discrete optimization problem 
\begin{eqnarray}
\min\limits_{y_{u,h}\in K_h}\ J(y_{u,h})={1\over 2}\|y_{f,h}+y_{u,h}-y_d\|_{0,\Omega}^2+{\alpha\over 2}\|y_{u,h}\|_{1,\Omega}^2\nonumber\\
\mbox{subject\ to}\quad y_{u,h}\in V_h:\quad a(y_{u,h},v_h)=0\quad\forall v_h\in V_h^0.\label{Neumann_VI_h}
\end{eqnarray}

Define the discrete kernel space 
\begin{eqnarray}
V_{N,h}^0=\{y_h\in V_h:\quad a(y_h,v_h)=0\quad\forall v_h\in V_h^0\}.\nonumber
\end{eqnarray}
Then the above optimization problem can be written as 
\begin{eqnarray}
\min\limits_{y_{u,h}\in V_{N,h}^0\cap K_h}\ J(y_{u,h})={1\over 2}\|y_{f,h}+y_{u,h}-y_d\|_{0,\Omega}^2+{\alpha\over 2}\|y_{u,h}\|_{1,\Omega}^2,\nonumber
\end{eqnarray}
which can be characterized as: Find $y_{u,h}\in V_{N,h}^0\cap K_h$ such that
\begin{eqnarray}
\alpha a(y_{u,h},v_h-y_{u,h})+(y_{u,h},v_h-y_{u,h})\geq (y_d-y_{f,h},v_h-y_{u,h})\quad\forall v_h\in V_{N,h}^0\cap K_h.\nonumber
\end{eqnarray}
After solving the above variational inequality, we then obtain an approximation $y_h:=y_{u,h}+y_{f,h}\in V_h$ to $y$ and there holds $y_h\leq y_{b,h}$ in $\Omega$.

Once we obtain the discrete state $y_h$, we can recover the boundary control $u_h$ by solving the following equation: Find $u_h\in V_h(\Gamma):=V_h|_{\Gamma}$ such that
\begin{eqnarray}
\langle u_h,v_h\rangle_\Gamma=a(y_h,v_h)-(f,v_h)\quad\forall v_h\in V_h.\label{Neumann_uh}
\end{eqnarray}
This is motivated from the variational form of the state equation: Find $y\in H^1(\Omega)$ such that
\begin{eqnarray}
a(y,v)=(f,v)+\langle u,v\rangle_\Gamma\quad\forall v\in H^1(\Omega).
\end{eqnarray}
Another trivial approach is to compute $u_h=\nabla y_h\cdot n|_{\Gamma}$.

At this moment we are not able to derive a priori error estimates for the finite element approximation to the variational inequality derived from Dirichlet or Neumann boundary control problems. The main reason is that we do not have a precise characterization of the Lagrange multiplier for the second order elliptic inequality with equality constraints. However,  numerical results in Section 5 indicate a first order convergence for the state under the energy norm. The theoretical validation remains open.

\section{Numerical results}
\setcounter{equation}{0}

In this section we carry out extensive numerical experiments to show the numerical performance of the proposed algorithms. We consider the Dirichlet and Neumann boundary control problems with pointwise state constraints, as well as the distributed control problems with pointwise constraints on either the state or the state gradient.

Denote by $p_i$ the nodes and $\psi_i$ the corresponding linear
nodal basis functions, i.e. $\psi_i(p_j) = \delta_{ij}$, where
$\delta_{ij}$ is the Chronecker symbol. Then the finite element
approximations of the adjoint state and the state are $p_h = \sum\limits_{i=1}^{N_0} P_i\psi_i$ and
$y_{u,h}=\sum\limits_{i=1}^N Y_i\psi_i$, where $N$ and $N_0$ are the numbers of degrees of freedom for $V_h$ and $V_h^0$ respectively. Let
\begin{eqnarray}
(L_h)_{i,j} = a(\psi_i,\psi_j),\ \  B_{i,j} =
(\psi_i,\psi_j),\ \ 
f_{i} = (y_d-y_f,\psi_i),\ \  \Psi_i =
y_b(p_i),\label{6.2}
\end{eqnarray}
where $y_d$ is the desired state and $y_b$ is the obstacle.

The discrete variational inequality (\ref{Dirichlet_VI_h}) or (\ref{Neumann_VI_h}) with equality constraint can be formulated and solved by (\ref{Projection_equ}) where 
 \begin{center}$S=\left[
   \begin{array}{cc}
    (\alpha L_h+B)_{N\times N} &-(L_h)_{N\times N_0}\\
     (L_h)_{N_0\times N} & 0_{N_0\times N_0}  \\
  \end{array}
\right]$,\ \  $b = \left[
     \begin{array}{cc}
     f\\
     0\\
     \end{array}
     \right]$,\ \ $X = \left[
     \begin{array}{cc}
     Y\\
     P\\
     \end{array}
     \right]$.
\end{center}
Then the coupled system can be written as the following linear projection equation
\begin{eqnarray}
X=P_{X_{ad}}(X-(SX+b)),\label{Projection_equ}
\end{eqnarray}
where $P_{X_{ad}}$ denotes the orthogonal projection onto the admissible set $X_{ad}:=\{(v_1,v_2)\in H^1(\Omega)\times H_0^1(\Omega):\ v_1\leq y_b-y_f\quad \mbox{in}\ \Omega\}$. 

To solve the above projection equation, we use a projection
gradient method that was proposed in \cite{He}. The variational inequality (\ref{Distribute_VI_discrete}) arising from the distribute control can also be solved by the mentioned projection algorithm, other optimization algorithms (cf. \cite{GlowinskiLions}), e.g., the semi-smooth Newton method or SOR method, can also be applied.  The projection gradient algorithm can be described as follows:
\\~\\
\fbox{\parbox{\textwidth}{ \begin{Algorithm} \label{Alg:projection}
Given $S$, $b$ and
tolerance TOL;

 (1) $g = SX + b$;
 
 (2) $e = X - \mbox{proj}\ (X-g)$;

 (3) error $= \|e\|_2$, if error $<$ TOL then stop and
output X;

 (4) $d=S^Te+(SX + b)$;

 (5) $\rho=\frac{\|e\|_2^2}{\|(S^T+I)e\|_2^2}$;

(6) $X = \mbox{proj}\ (X - \rho d)$, go to (2).

\end{Algorithm}
}} \vskip 0.5cm

In Algorithm \ref{Alg:projection}, $\mbox{proj} (\cdot)$ denotes the projection
operator. In our following tests we set ${\rm TOL}=5.0e-8$. For the variational inequality with additional equality constraint resulted from Dirichlet or Neumann boundary control problems, we also test the PDHG method and obtain the similar results. However, PDHG is much slower than the above projection method so we omit the corresponding results.

\subsection{Elliptic distributed optimal control problems}

In the following numerical examples, we set $\Omega=(0,1)^2$, $f=0$ and consider the following four cases (cf. \cite{GongYan} for the first three cases)
\begin{itemize} 
\item Case 1: $\alpha = 1e-4$, $y_d(x_1, x_2)=\sin(4\pi x_1x_2) + 1.5$ and $y_b = 1$.

\item Case 2: $\alpha = 1e-3$, $y_d(x_1, x_2) = \sin(2\pi x_1x_2)$ and $y_b = 0.1$.

\item Case 3: $\alpha= 0.1$, $y_d(x_1, x_2) = 10(\sin(2x_1)+x_2)$ and $y_b = 0.01$.

\item Case 4: $\alpha= 0.1$, $y_d(x_1, x_2) = \sin(\pi x_1)\sin(\pi x_2)$ and $y_b = 0.1$.

\end{itemize}

In these cases we test the influence of the regularization parameter $\alpha$ and the consistency of the boundary conditions of the target state $y_d$. 

In order to test the performance of the algorithm, we also consider an example posed on a nonconvex polygonal domain. We set the following data and test this case for only the distributed control problem:
\begin{itemize}
\item Case 5: set $\Omega=(-1,1)^2\backslash \{[0,1]\times [-1,0]\}$, $\alpha =0.1$, $y_d=r^{-1/3}\tan (2/3\theta)$ and $y_b=0.5$, where $(r,\theta)$ is the polar coordinate.
\end{itemize}

\begin{Example}\label{Exm:Distributed}
We consider the elliptic distributed optimal control problems with pointwise state constraints with data given by Case 1, Case 2, Case 3, Case 4 and Case 5.
\end{Example}
We present the profiles of the discrete states and controls in Figure \ref{Fig:Distribute_Case1}-\ref{Fig:Distribute_Case4} for the first four cases. We can compare the results for the  first three cases with that computed in \cite{GongYan}. We can see a big difference for both the state and the control, and the discrete controls are generally not smooth. This is reasonable because the distributed control problem in $H^{-1}(\Omega)$ allows nonsmooth functions compared to the control problem posed in $L^2(\Omega)$. This is also observed in \cite{LangerSteinbachYang}. The boundary conditions of the target state for the first three cases are not consistent with the state equation, and we can observe oscillations of the discrete control near the boundary. However, for the fourth case with consistent target state, we do not observe this oscillation. Moreover, we can observe singular behaviors of the discrete controls along the boundary of active sets of the state variable.

We also test the convergence rates for the state and control approximations as shown in Table \ref{Tab:Distribute_Case1}-\ref{Tab:Distribute_Case4} for the first four cases. Since there are no analytical solutions, we use the computed discrete state and control on a fine mesh with 1050625 degrees of freedom for Case 1 and 263169 degrees of freedom for the other cases as the reference solutions. We can observe first order convergence for the energy norm error of the discrete state variable which confirms our theoretical results. We also compute the $L^2$-norm error of the discrete state and observe second order convergence rate, which is in agreement with many numerical observations for elliptic obstacle problems but is not rigorously proved. For the convergence of the discrete control, we can observe nearly half an order convergence. This suggests that the control is in $L^2(\Omega)$ which is consistent with the $H^2(\Omega)$-regularity of the state variable. However, a  function in $L^2(\Omega)$ can not merely admit $L^2(\Omega)$-smoothness but usually exhibits $H^{1/2-\varepsilon}(\Omega)$-regularity for any small $\varepsilon>0$, where the latter space is the exact Sobolev space for discontinuous functions. The observed half an order convergence of the discrete control confirms our above expectations.    

In Figure \ref{Fig:Distribute_Case5} and Table \ref{Tab:Distribute_Case5} we also present the results for Case 5  posed in a non-convex polygonal domain. We can observe reduced orders of convergence for the $H^1(\Omega)$- and $L^2(\Omega)$-norm errors which are reasonable because the solutions may not be in $H^2(\Omega)$. Moreover, nearly half an order convergence is observed for the control variable.   

\begin{figure}
\includegraphics[scale=0.28]{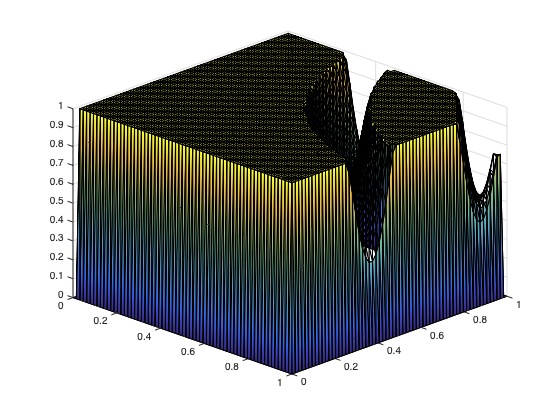}
\includegraphics[scale=0.28]{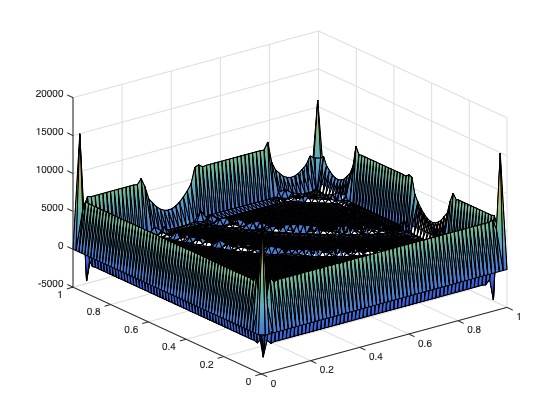}
\caption{The profiles of the discrete state $y_h$ (left) and control $u_h$ (right) for Example \ref{Exm:Distributed}: Case 1. }\label{Fig:Distribute_Case1}
\end{figure}

\begin{figure}
\includegraphics[scale=0.28]{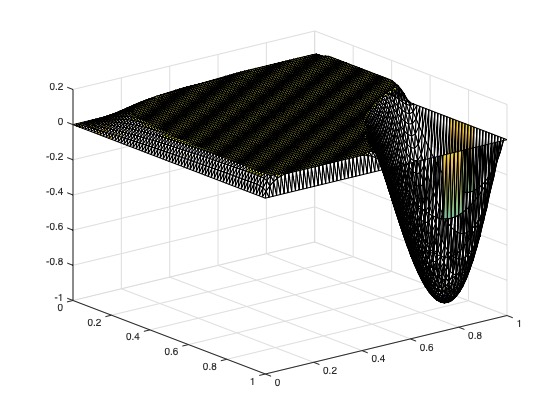}
\includegraphics[scale=0.28]{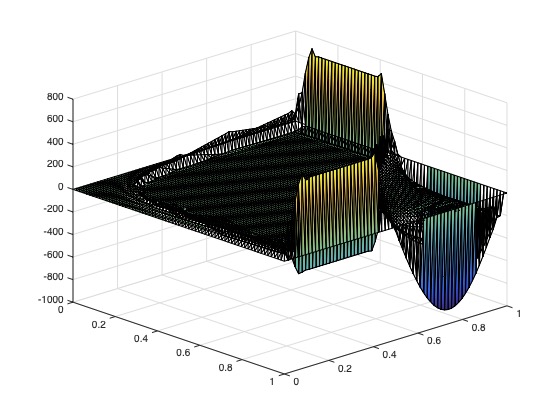}
\caption{The profiles of the discrete state $y_h$ (left) and control $u_h$ (right) for Example \ref{Exm:Distributed}: Case 2. }\label{Fig:Distribute_Case2}
\end{figure}

\begin{figure}
\includegraphics[scale=0.28]{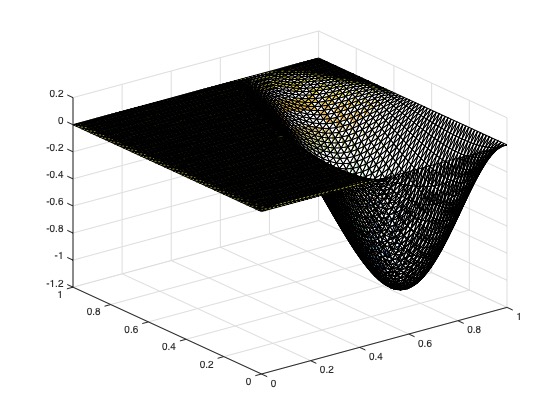}
\includegraphics[scale=0.28]{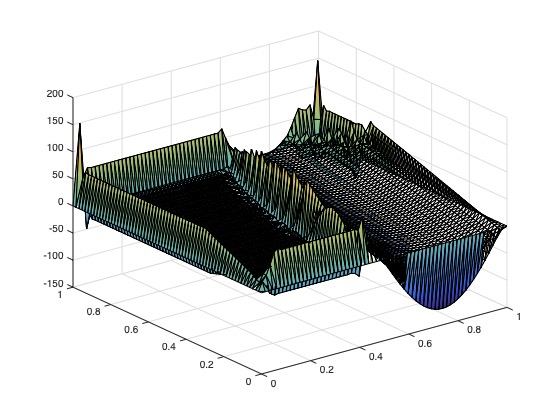}
\caption{The profiles of the discrete state $y_h$ (left) and control $u_h$ (right) for Example \ref{Exm:Distributed}: Case 3. }\label{Fig:Distribute_Case3}
\end{figure}

\begin{figure}
\includegraphics[scale=0.28]{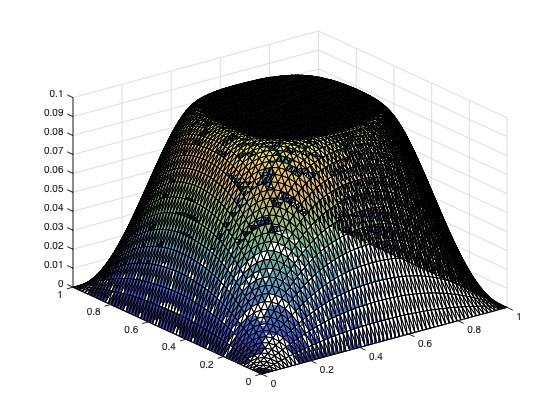}
\includegraphics[scale=0.28]{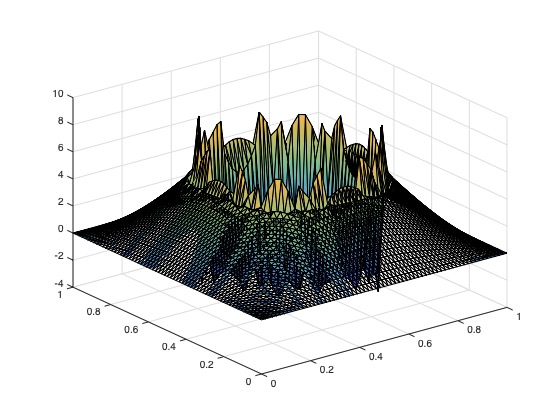}
\caption{The profiles of the discrete state $y_h$ (left) and control $u_h$ (right) for Example \ref{Exm:Distributed}: Case 4. }\label{Fig:Distribute_Case4}
\end{figure}

\begin{figure}
\includegraphics[scale=0.28]{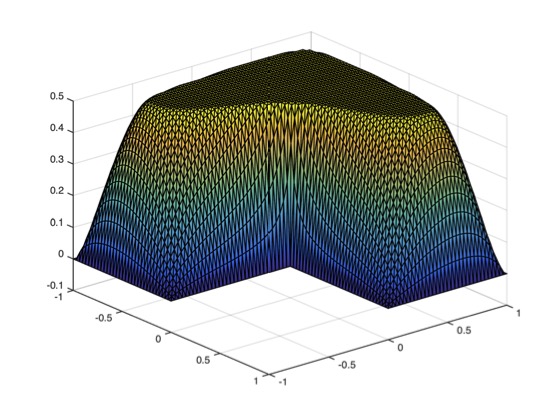}
\includegraphics[scale=0.28]{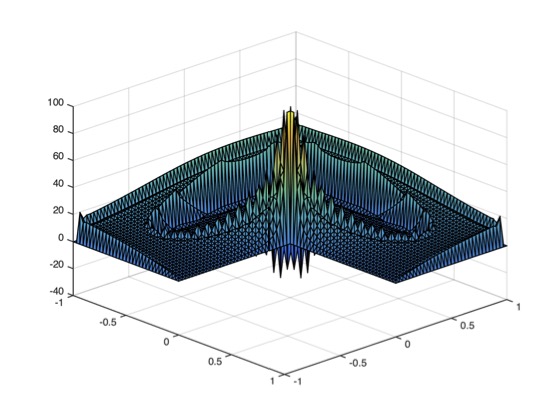}
\caption{The profiles of the discrete state $y_h$ (left) and control $u_h$ (right) for Example \ref{Exm:Distributed}: Case 5. }\label{Fig:Distribute_Case5}
\end{figure}

\begin{table}\scriptsize
\begin{center}
\begin{tabular}{|c|c|c|c|c|c|c|}
\hline
${\rm Dofs}$&$289$&$1089$&$4225$&$16641$&$66049$&$263169$\\
\hline
$\|u-u_h\|_{0,\Omega}$&2.37054e3&2.33463e3&2.06213e3&1.39117e3&9.66214e2&5.66897e2\\
\hline
order&$\setminus$&0.02202 &0.17906 &0.56784 &0.52588&0.76926  \\
\hline
$\|y-y_h\|_{0,\Omega}$&2.22190e-1&1.24713e-1&4.81439e-2&1.06087e-2&2.73754e-3&5.69692e-4\\
\hline
order&$\setminus$&0.83318  &1.37319 &2.18211& 1.95430&2.26463  \\
\hline
$ |y-y_h|_{1,\Omega}$&1.62620e1&1.43833e1&1.00702e1&5.30384e0&2.64436e0&1.19051e0\\
 \hline
order&$\setminus$&0.17711 & 0.51430&0.92498&1.00412&1.15134 \\
\hline
 \end{tabular}
\end{center}
 \caption{Errors of the control $u$ and state $y$ for Example \ref{Exm:Distributed}: Case 1.}\label{Tab:Distribute_Case1}
\end{table}

\begin{table}\scriptsize
\begin{center}
\begin{tabular}{|c|c|c|c|c|c|c|}
\hline
${\rm Dofs}$&$81$&$289$&$1089$&$4225$&$16641$&$66049$\\
\hline
$\|u-u_h\|_{0,\Omega}$&1.21742e2&1.14069e2&1.00602e2&7.50766e1&4.46354e1&2.71775e1\\
\hline
order&$\setminus$&0.09392 &0.18125 &0.42222 &0.75017 &0.71578  \\
\hline
$\|y-y_h\|_{0,\Omega}$&6.98265e-2&2.54640e-2&8.54258e-3&2.41114e-3&5.78624e-4&1.22131e-4\\
\hline
order&$\setminus$&1.45532  &1.57572 &1.82496  & 2.05902&2.24420  \\
\hline
$|y-y_h|_{1,\Omega}$&2.48378e0&1.66600e0&1.02856e0&5.62518e-1&2.77043e-1&1.24375e-1\\
 \hline
order&$\setminus$&0.57615 & 0.69576&0.87065&1.02179&1.15541 \\
\hline
 \end{tabular}
\end{center}
\caption{Errors of the control $u$ and state $y$ for Example \ref{Exm:Distributed}: Case 2.}\label{Tab:Distribute_Case2}
\end{table}

\begin{table}\scriptsize
\begin{center}
\begin{tabular}{|c|c|c|c|c|c|c|}
\hline
${\rm Dofs}$&$81$&$289$&$1089$&$4225$&$16641$&$66049$\\
\hline
$\|u-u_h\|_{0,\Omega}$&2.22769e1&1.95942e1&1.79414e1&1.48502e1&8.27368e0&5.08190e0 \\
\hline
order&$\setminus$&0.18512 &0.12713 &0.27281 &0.84388 &0.70316  \\
\hline
$\|y-y_h\|_{0,\Omega}$&4.51573e-2&1.14652e-2&3.09455e-3&7.96831e-4&1.87014e-4&3.88578e-5  \\
\hline
order&$\setminus$&1.97770 &1.88946&1.95738& 2.09113&2.26687  \\
\hline
$|y-y_h|_{1,\Omega}$&1.02901e0&5.46609e-1&2.92568e-1&1.54741e-1&7.56909e-2&3.41020e-2 \\
 \hline
order&$\setminus$&0.91268 &0.90174&0.91892&1.03166&1.15026 \\
\hline
 \end{tabular}
\end{center}
\caption{Errors of the control $u$ and state $y$ for Example \ref{Exm:Distributed}: Case 3.}\label{Tab:Distribute_Case3}
\end{table}

\begin{table}\scriptsize
\begin{center}
\begin{tabular}{|c|c|c|c|c|c|c|}
\hline
${\rm Dofs}$&$25$&$81$&$289$&$1089$&$4225$&$16641$\\
\hline
$\|u-u_h\|_{0,\Omega}$&1.56696e0&1.00545e0&7.30718e-1&4.71346e-1&3.41755e-1&2.34385e-1\\
\hline
order&$\setminus$&0.64013&0.46045&0.63253&0.46382&0.54408\\
\hline
$\|y-y_h\|_{0,\Omega}$&1.07669e-2&3.31279e-3&7.63086e-4&1.65823e-4&4.32484e-5&1.01038e-5\\
\hline
order&$\setminus$& 1.70048&2.11813& 2.20220&1.93893&2.09775 \\
\hline
$|y-y_h|_{1,\Omega}$&1.76102e-1&9.43473e-2&4.84632e-2&2.41770e-2&1.20412e-2&5.89363e-3\\
 \hline
order&$\setminus$&0.90036 & 0.96109&1.00325&1.00566&1.03075 \\
\hline
 \end{tabular}
\end{center}
\caption{Errors of the control $u$ and state $y$ for Example \ref{Exm:Distributed}: Case 4.}\label{Tab:Distribute_Case4}
\end{table}

\begin{table}\scriptsize
\begin{center}
\begin{tabular}{|c|c|c|c|c|c|c|}
\hline
${\rm Dofs}$&$25$&$81$&$289$&$1089$&$4225$&$16641$\\
\hline
$\|u-u_h\|_{0,\Omega}$&1.94921e1&1.33042e1&9.34247e0&6.31659e0&4.07998e0&2.51172e0
\\
\hline
order&$\setminus$&0.55101 & 0.51001 & 0.56466 & 0.63058 & 0.69989\\
\hline
$\|y-y_h\|_{0,\Omega}$&1.29567e-1&3.43028e-2&1.04353e-2&3.29783e-3&1.05923e-3&2.75509e-4\\
\hline
order&$\setminus$& 1.91730 & 1.71685 & 1.66188 & 1.63850 & 1.94284 \\
\hline
$|y-y_h|_{1,\Omega}$&1.70903e0&9.81369e-1&5.40935e-1&2.96144e-1&1.59543e-1&7.87231e-2\\
 \hline
order&$\setminus$&0.80031 & 0.85934 & 0.86916 & 0.89235 & 1.01909 \\
\hline
 \end{tabular}
\end{center}
\caption{Errors of the control $u$ and state $y$ for Example \ref{Exm:Distributed}: Case 5.}\label{Tab:Distribute_Case5}
\end{table}

\subsection{Dirichlet boundary control problems}

\begin{Example}\label{Exm:Dirichlet}
We consider the elliptic Dirichlet boundary control problem with pointwise state constraints with data given by 
\begin{itemize} 
\item Case 1: $\alpha= 0.01$, $y_d(x_1, x_2) = \sin(\pi x_1)\sin(\pi x_2)$ and $y_b = 0.4$.

\item Case 2: $\alpha = 1e-3$, $y_d(x_1, x_2) = \sin(2\pi x_1x_2)$ and $y_b = 0.1$.

\item Case 3: $\alpha= 0.1$, $y_d(x_1, x_2) = 10(\sin(2x_1)+x_2)$ and $y_b = 0.01$.


\end{itemize}

\end{Example}

For Dirichlet boundary control problems we also list in Figure \ref{Fig:Dirichlet_Case1}-\ref{Fig:Dirichlet_Case3} and Table \ref{Tab:Dirichlet_Case1}-\ref{Tab:Dirichlet_Case3} the profiles and the convergence rates of the state and control approximations, although we did not provide a priori error estimates. Again, we can observe first order convergence for the energy norm error and second order convergence for the $L^2(\Omega)$-norm error. The observed convergence rates are consistent with our expectations but a theoretical justification is still missing. Since the control is the Dirichlet trace of the state, a standard trace theorem implies the $O(h^{3/2})$-order convergence for the control in $L^2(\Gamma)$-norm if we have $H^2$-regularity for the state. However, we observe a better second order convergence rate that is the best possible rate for linear finite element approximations. Moreover, compared to the distributed and Neumann boundary control problems, the Dirichlet boundary control is continuous along the boundary. In fact, we have
$u\in H^{1/2}(\Gamma)\cap \prod\limits_{i=1}^4H^{3/2}(\Gamma_i)$ if $y\in H^2(\Omega)$. When the state $y$ has higher regularity, say $H^{5/2-\varepsilon}(\Omega)$, we have $u\in \prod\limits_{i=1}^4H^{2-\varepsilon}(\Gamma_i)$ which may partially explain the observed second order convergence. The above analysis suggests that our proposed method is very suitable for solving Dirichlet boundary control problems with pointwise state constraints.  

\begin{figure}
\includegraphics[scale=0.28]{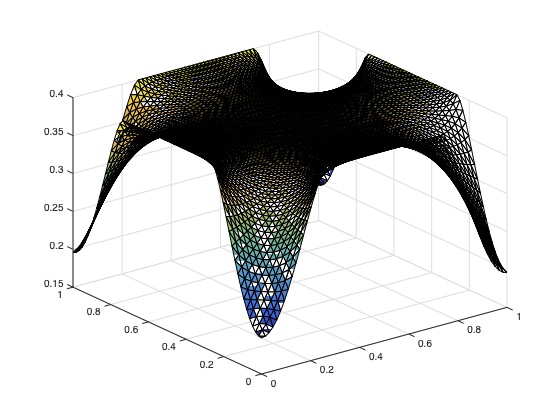}
\includegraphics[scale=0.28]{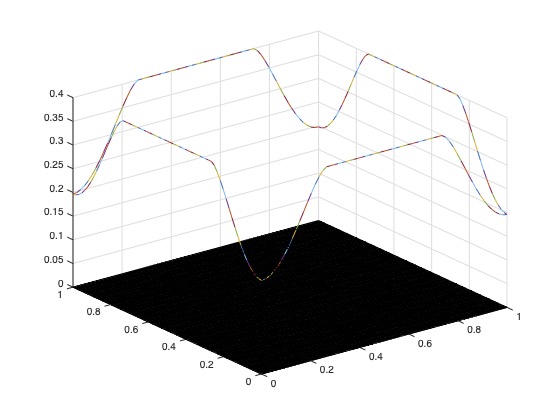}
\caption{The profiles of the discrete state $y_h$ (left) and control $u_h$ (right) for Example \ref{Exm:Dirichlet}: Case 1. }\label{Fig:Dirichlet_Case1}
\end{figure}

\begin{figure}
\includegraphics[scale=0.28]{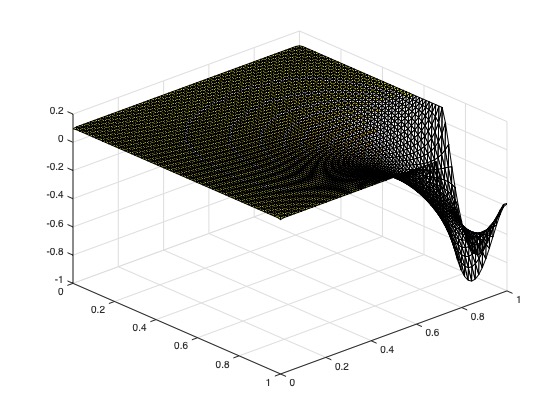}
\includegraphics[scale=0.28]{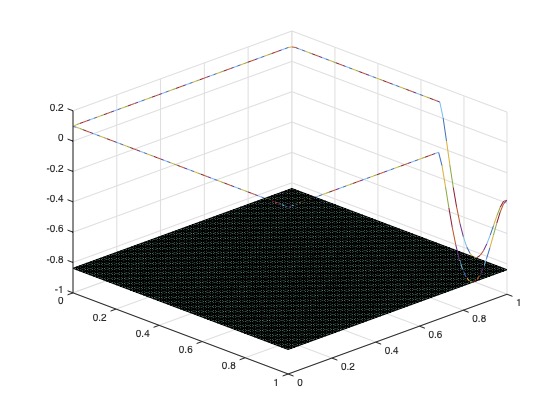}
\caption{The profiles of the discrete state $y_h$ (left) and control $u_h$ (right) for Example \ref{Exm:Dirichlet}: Case 2. }\label{Fig:Dirichlet_Case2}
\end{figure}
\begin{figure}
\includegraphics[scale=0.28]{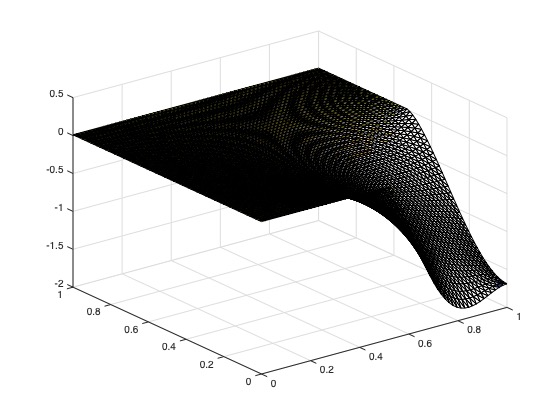}
\includegraphics[scale=0.28]{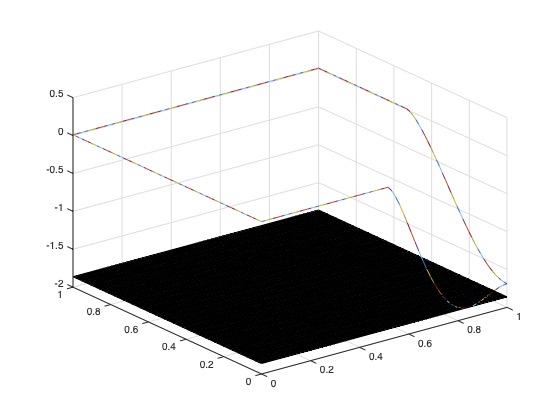}
\caption{The profiles of the discrete state $y_h$ (left) and control $u_h$ (right) for Example \ref{Exm:Dirichlet}: Case 3. }\label{Fig:Dirichlet_Case3}
\end{figure}

\begin{table}\scriptsize
\begin{center}
\begin{tabular}{|c|c|c|c|c|c|c|}
\hline
${\rm Dofs}$&$81$&$289$&$1089$&$4225$&$16641$&$66049$\\
\hline
$\|u-u_h\|_{0,\Omega}$&1.65574e-2 & 5.00329e-3 & 1.93327e-3 & 5.94725e-4 & 2.32788e-4&1.25012e-4\\
\hline
order&$\setminus$&1.72653 & 1.37183 & 1.70075 & 1.35321 & 0.89695  \\
\hline
$\|y-y_h\|_{0,\Omega}$&5.37217e-3 & 1.32124e-3 & 3.17696e-4 & 6.85380e-5 & 4.15074e-5 & 3.52024e-5\\
\hline
order&$\setminus$&2.02361 & 2.05617 & 2.21267& 0.72354 & 0.23769  \\
\hline
$|y-y_h|_{1,\Omega}$&1.98603e-1 & 1.04774e-1 & 5.43983e-2 & 2.78890e-2 & 1.37734e-2 & 6.19718e-3\\
 \hline
order&$\setminus$&0.92260 & 0.94565 & 0.96387 & 1.01781& 1.15220\\
\hline
 \end{tabular}
\end{center}
 \caption{Errors of the control $u$ and state $y$ for Example \ref{Exm:Dirichlet}: Case 1.}\label{Tab:Dirichlet_Case1}
\end{table}

\begin{table}\scriptsize
\begin{center}
\begin{tabular}{|c|c|c|c|c|c|c|}
\hline
${\rm Dofs}$&$81$&$289$&$1089$&$4225$&$16641$&$66049$\\
\hline
$\|u-u_h\|_{0,\Omega}$&1.21661e-1 & 2.56414e-2 & 9.83374e-3 & 3.30843e-3 & 9.95541e-4 & 2.49940e-4\\
\hline
order&$\setminus$&2.24632 & 1.38266 & 1.57159 & 1.73260 & 1.99390 \\
\hline
$\|y-y_h\|_{0,\Omega}$&1.76084e-2 & 5.26225e-3 & 1.29948e-3 & 3.28244e-4 & 8.29667e-5 & 2.06652e-5\\
\hline
order&$\setminus$&1.74251 & 2.01775 & 1.98509 & 1.98416 & 2.00533\\
\hline
$|y-y_h|_{1,\Omega}$&7.84493e-1 & 4.92971e-1 & 2.67145e-1 & 1.39058e-1 & 6.94337e-2 & 3.14369e-2\\
 \hline
order&$\setminus$&0.67026 & 0.88388 & 0.94194 & 1.00197 & 1.14318 \\
\hline
 \end{tabular}
\end{center}
\caption{Errors of the control $u$ and state $y$ for Example \ref{Exm:Dirichlet}: Case 2.}\label{Tab:Dirichlet_Case2}
\end{table}

\begin{table}\scriptsize
\begin{center}
\begin{tabular}{|c|c|c|c|c|c|c|}
\hline
${\rm Dofs}$&$81$&$289$&$1089$&$4225$&$16641$&$66049$\\
\hline
$\|u-u_h\|_{0,\Omega}$&5.51180e-2 & 2.04315e-2 & 5.82645e-3 & 9.64193e-4 & 2.61135e-4 & 1.09503e-4 \\
\hline
order&$\setminus$& 1.43173 & 1.81011 & 2.59522 & 1.88453 & 1.25383 \\
\hline
$\|y-y_h\|_{0,\Omega}$&2.12443e-2 & 6.25472e-3 & 1.52660e-3 & 2.87902e-4 & 8.39574e-5 & 3.70974e-5 \\
\hline
order&$\setminus$&1.76406 & 2.03462 & 2.40667 & 1.77785 & 1.17834 \\
\hline
$|y-y_h|_{1,\Omega}$&6.48880e-1 & 3.47359e-1 & 1.79600e-1 & 9.13242e-2 & 4.48402e-2 & 2.01002e-2 \\
 \hline
order&$\setminus$&0.90152 & 0.95165 & 0.97571 & 1.02620 & 1.15758\\
\hline
 \end{tabular}
\end{center}
\caption{Errors of the control $u$ and state $y$ for Example \ref{Exm:Dirichlet}: Case 3.}\label{Tab:Dirichlet_Case3}
\end{table}

\subsection{Neumann boundary control problems}
\begin{Example}\label{Exm:Neumann}
We consider the elliptic Neumann boundary control problem with pointwise state constraints with data given by 
\begin{itemize} 
\item Case 1: $\alpha= 0.01$, $y_d(x_1, x_2) = \sin(\pi x_1)\sin(\pi x_2)$ and $y_b = 0.4$.
\end{itemize}
Other numerical results for Neumann boundary controls with pointwise state constraints can be found in, e.g., \cite{KrumbiegelMeyerRosch}.
\end{Example}

For Neumann boundary control problems, we present in  Figure \ref{Fig:Neumann_Case1} the profiles of the discrete states and controls, and in Table \ref{Tab:Neumann_Case1} the convergence rates of the state and control approximations, where the discrete controls are computed by (\ref{Neumann_uh}). Again, for the state variable we can observe a first order convergence for the energy norm error and a second order convergence for the $L^2(\Omega)$-norm error, all of which are in agreement with our expectations. However, we can only observe reduced first order convergence rate for the control in $L^2(\Gamma)$-norm. If the state $y\in H^2(\Omega)$ when $\Omega$ is convex and $y_d\in L^2(\Omega)$,  from the standard trace theorem we have $u\in\prod\limits_{i=1}^4H^{1/2}(\Gamma_i)$. Moreover, since $y_d$ is smooth and $y_b$ is a constant, we may expect the $W^{s,p}(\Omega)$ regularity for the state $y$ for any $s<2+1/p$, $p\in [1,\infty)$. So that in dimension two we have $y\in H^{5/2-\varepsilon}(\Omega)$ and  $u\in\prod\limits_{i=1}^4H^{1-\varepsilon}(\Gamma_i)$ for any $\varepsilon>0$. Therefore, the control is not globally continuous on polygonal domains but exhibits local higher regularity. However, there are only finite many discontinuous points (in fact, 4 points) on the boundary. This may partially explain the observed first order convergence of the discrete control.

\begin{figure}
\includegraphics[scale=0.28]{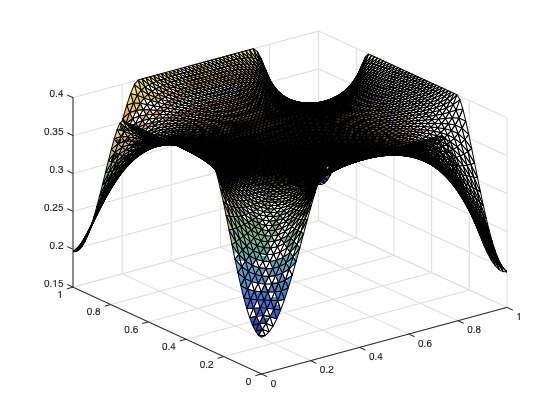}
\includegraphics[scale=0.28]{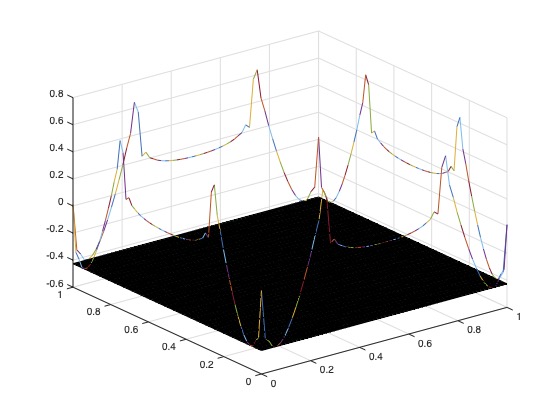}
\caption{The profiles of the discrete state $y_h$ (left) and control $u_h$ (right) for Example \ref{Exm:Neumann}: Case 1. }\label{Fig:Neumann_Case1}
\end{figure}

\begin{table}\scriptsize
\begin{center}
\begin{tabular}{|c|c|c|c|c|c|c|}
\hline
${\rm Dofs}$&$41$&$145$&$545$&$2113$&$8321$&$33025$\\
\hline
$\|u-u_h\|_{0,\Omega}$&5.84845e-1 & 3.38911e-1 & 1.93965e-1 & 9.14250e-2 & 4.26479e-2 & 1.80134e-2\\
\hline
order&$\setminus$&0.78715&0.80511&1.08514&1.10011&1.24341\\
\hline
$\|y-y_h\|_{0,\Omega}$&9.71945e-3 & 2.62477e-3 & 9.07135e-4 & 1.92152e-4 & 4.71050e-5 & 7.92093e-6\\
\hline
order&$\setminus$&1.88868&1.53280&2.23907&2.02830&2.57214 \\
\hline
$\|y-y_h\|_{1,\Omega}$&2.06528e-1 & 1.27615e-1 & 7.25363e-2 &  3.73501e-2  & 1.85047e-2 & 8.32360e-3\\
 \hline
order&$\setminus$&0.69454&0.81502&0.95759&1.01322&1.15261\\
\hline
 \end{tabular}
\end{center}
\caption{Errors of the control $u$ and state $y$ for Example \ref{Exm:Neumann}: Case 1.}\label{Tab:Neumann_Case1}
\end{table}

\subsection{Elliptic distributed optimal control problems with gradient state constraints}
\begin{Example}\label{Exm:gradient}
We set $\Omega=(0,1)^2$, $\alpha=0.1$, $y_b=1$ and $y_d=2\sin(\pi x_1)\sin(\pi x_2)$.
\end{Example}

To solve the elliptic variational inequality with gradient constraints we use the semi-smooth Newton method proposed in \cite{Anyyeva}, which is achieved by penalizing the gradient constraints of the minimization problem in the objective functional and solving the latter one by a semi-smooth Newton method. To be more specific, we solve the unconstrained optimization problem
\begin{eqnarray}
\min\limits_{y\in H_0^1(\Omega)}J_\gamma(y)&={1\over 2}\int_\Omega (\alpha|\nabla y|^2+y^2)dx-\int_{\Omega}y_dydx +\frac{\gamma}{2}\int_\Omega \max(0,|\nabla y|^2-y_b^2)^2dx.\label{Min_pen}
\end{eqnarray}
The functional $J_\gamma$ is first order Fr\'{e}chet differentiable. We use the so-called Newton derivative to apply Newton's method to the Euler-Lagrange system of the unconstrained minimization problem. This strategy is actually the active set strategy as pointed out in \cite{Kunisch}. 

At each Newton step we solve the following equation for $y_\gamma^{k+1}\in H_0^1(\Omega)$:
\begin{eqnarray}
&&\int_\Omega(\alpha+2\gamma\chi_{\mathcal{A}}^k(|\nabla y_\gamma^k|^2-y_b^2))\nabla y_\gamma^{k+1}\cdot\nabla v+4\gamma\int_\Omega\chi_{\mathcal{A}}^k(\nabla y_\gamma^{k+1})^T\nabla y_\gamma^k \otimes\nabla y_\gamma^k\nabla v\nonumber\\
&&+\int_\Omega y_\gamma^{k+1} v=4\gamma\int_\Omega\chi_{\mathcal{A}}^k|\nabla y_\gamma^k|^2\nabla y_\gamma^k\cdot\nabla v+\int_\Omega y_dv,\ \ \ \forall v\in H_0^1(\Omega),\label{gradient_constrained_penalty}
\end{eqnarray}
where $\chi_{\mathcal{A}}^k$ is the characteristic function of the active set $\mathcal{A}$. We also use the method of continuation to increase $\gamma$ gradually from $1$ up to $1.0e+5$, so that the solutions associated to smaller $\gamma$ provide good initial guesses for the solutions associated to larger $\gamma$.

\begin{figure}
\includegraphics[scale=0.28]{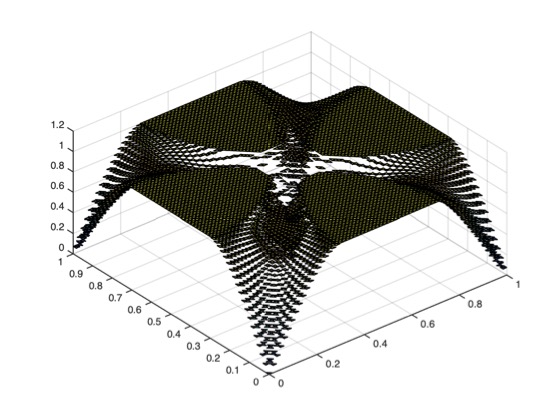}
\includegraphics[scale=0.28]{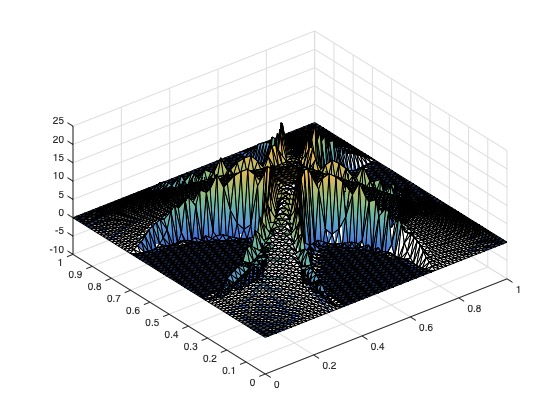}
\caption{The profiles of the  discrete gradient state $|\nabla y_h|$ (left) and control $u_h$ (right) for Example \ref{Exm:gradient}. }\label{Fig:gradient_1}
\end{figure}

\begin{figure}
\includegraphics[scale=0.28]{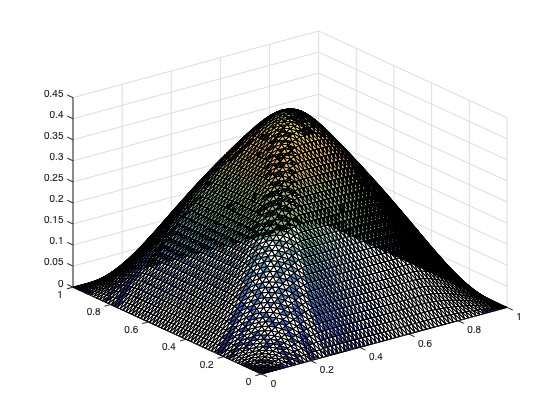}
\caption{The profiles of the  discrete state $ y_h$ for Example \ref{Exm:gradient}. }\label{Fig:gradient_2}
\end{figure}

In Figure \ref{Fig:gradient_1} and \ref{Fig:gradient_2} we present the profiles of the discrete gradient state, state and control. We can observe that  the control oscillates on the boundaries of the active sets of the gradient state constraints. In Table \ref{Table:gradient} we show the convergence rates for the state and control. We can observe first and second order convergence rates for respectively the $H^1$- and $L^2$-norm errors of the state variable, which are optimal with respect to the finite element space. For the error estimate of second order elliptic variational inequality with gradient constraints, we refer to \cite{FalkMercier} for a dual formulation where first order convergence rate was proved for the $L^2$-norm of the gradient approximation. As mentioned in \cite[p. 142]{FalkMercier} a suboptimal  error estimate was obtained in \cite{Falk1971} for a standard approximation of elliptic variational inequality with gradient constraints by using piecewise linear finite elements. On the other hand, we can observe nearly half an order convergence for the control. This is consistent with the pointwise state constraints case, and shows a $L^2(\Omega)$ control and a $H^2(\Omega)$ state.

\begin{table}
\scriptsize
\begin{center}
\begin{tabular}{|c|c|c|c|c|c|c|}
\hline
${\rm Dofs}$&$81$&$289$&$1089$&$4225$&$16641$&$66049$\\
\hline
$\|u-u_h\|_{0,\Omega}$&2.61112e0 & 1.64978e0 & 1.29265e0 & 9.16654e-1 & 6.96937e-1 & 4.76736e-1\\
\hline
order&$\setminus$&0.66240 & 0.35194 & 0.49588 & 0.39535 &  0.54784\\
\hline
$\|y-y_h\|_{0,\Omega}$& 1.09187e-2 & 2.30309e-3 & 4.41287e-4 & 1.19263e-4 & 2.69430e-5 & 5.70502e-6\\
\hline
order&$\setminus$& 2.24516 & 2.38378 & 1.88757 & 2.14616 & 2.23961 \\
\hline
$|y-y_h|_{1,\Omega}$& 2.23013e-1 & 1.15837e-1 & 5.92635e-2 & 2.97473e-2 & 1.45820e-2 & 6.53606e-3\\
 \hline
order&$\setminus$&0.94503 & 0.96688 & 0.99439 & 1.02857 & 1.15770\\
\hline
 \end{tabular}
\end{center}
\caption{Errors of the control $u$ and state $y$ for Example \ref{Exm:gradient}.}\label{Table:gradient}
\end{table}

\section{Concluding Remarks}
In this paper we proposed a new finite element method for solving elliptic optimal control problems with pointwise state constraints based on energy space regularizations. By using the equivalent representations of the energy space norms, we finally obtained second order elliptic variational inequalities, which can be solved efficiently by using standard finite element methods. The validation of the method has been verified by extensive numerical experiments. We also derived a priori error estimates for the distributed control problems with pointwise state constraints.


 \medskip
\end{document}